\let\oldmarginpar\marginpar
\renewcommand\marginpar[1]{\oldmarginpar[\raggedleft\footnotesize #1]%
{\raggedright\footnotesize #1}}
\renewcommand{\setminus}{{\smallsetminus}}
\newcommand{\HH}{{\mathbb{H}}}
\newcommand{\CC}{{\mathbb{C}}}
\newcommand{\bdy}{{\partial}}
\newcommand{\vol}{{\rm Volume}}
\newcommand{\area}{{\rm Area}}
\newcommand{\length}{{\rm Length}}
\newcommand{\Cr}{{\rm cr}}
\newcommand{\tw}{{\rm tw}}
\def\co{\colon\thinspace}
\theoremstyle{plain}
\newtheorem{theorem}{Theorem}[section]
\newtheorem{corollary}[theorem]{Corollary}
\newtheorem{lemma}[theorem]{Lemma}
\newtheorem{prop}[theorem]{Proposition}
\newtheorem*{namedtheorem}{\theoremname}
\newcommand{\theoremname}{testing}
\newenvironment{named}[1]{\renewcommand{\theoremname}{#1}\begin{namedtheorem}}{\end{namedtheorem}}
\theoremstyle{definition}
\newtheorem{define}[theorem]{Definition}
\begin{document}
\title{Cusp volumes of alternating knots}
\author{Marc Lackenby}
\address{Mathematical Institute, University of Oxford, Woodstock Road, Oxford, OX2 6GG, UK}

\author{Jessica S. Purcell}

\address{School of Mathematical Sciences, 9 Rainforest Walk, Monash University, Victoria 3800, Australia}



\begin{abstract}
We show that the cusp volume of a hyperbolic alternating knot can be bounded above and below in terms of the twist number of an alternating diagram of the knot. This leads to diagrammatic estimates on lengths of slopes,
and has some applications to Dehn surgery. Another consequence is that there is a
universal lower bound on the cusp density of hyperbolic alternating knots.
\end{abstract}

\maketitle

\section{Introduction}\label{sec:intro}
A major goal in knot theory is to relate geometric properties of the knot complement to combinatorial properties of the diagram.  In this paper, we relate the diagram of a hyperbolic alternating knot to its
cusp area, answering a question asked by Thistlethwaite~\cite{thistle:length}. He observed using computer software such as SnapPea~\cite{snappea, snappy} and other programs, including one developed by Thistlethwaite and Tsvietkova~\cite{thistle-tsviet}, that all alternating knots with small numbers of crossings have cusp area related to their twist number.  This led him to ask if all alternating knots have cusp area bounded below by a linear function of their twist number.  This question is finally settled in the affirmative in this paper.  Specifically, we show the following.

\begin{theorem}\label{thm:main}
Let $D$ be a prime, twist reduced alternating diagram for some hyperbolic knot $K$, and let $\tw(D)$ be its twist number. 
Let $C$ denote the maximal cusp of the hyperbolic 3-manifold $S^3\setminus K$. Then 
\[ A\,(\tw(D)-2) \leq \area(\bdy C) < 10\sqrt{3}\,(\tw(D)-1), \]
where $A$ is at least $2.278 \times 10^{-19}$.
\end{theorem}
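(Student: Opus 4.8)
\emph{Upper bound.} We dispose of the (softer) right-hand inequality first. Since $K$ has a single cusp, $C$ is embedded and $\area(\bdy C)=2\vol(C)\le 2\vol(S^3\setminus K)$; by Lackenby's bound on the volumes of alternating link complements, with the Agol--Thurston improvement, $\vol(S^3\setminus K)$ is at most a linear function of $\tw(D)$. The constant there is slightly too large for this crude estimate alone to yield $10\sqrt{3}$, so I would instead work inside the polyhedral decomposition of $S^3\setminus K$ along the two checkerboard surfaces of $D$ (Menasco): the cusp torus $\bdy C$ is tiled by the links of the ideal vertices of the two checkerboard polyhedra, the bigons produced by each twist region degenerate, what survives has on the order of $\tw(D)$ faces, and each such face has hyperbolic area bounded by a universal constant; summing gives the stated bound. (Alternatively one may bound the length of a checkerboard boundary slope -- and hence $\area(\bdy C)$ -- in terms of the Euler characteristic of that surface after collapsing twist regions.)

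\emph{Lower bound: reduction.} Each twist region $R_i$ of $D$ is encircled by a crossing circle bounding a disk meeting $K$ in two points; since $D$ is prime and twist reduced, these crossing disks are essential and pairwise non-parallel, and may be made pairwise disjoint. Each meets $\bdy C$ in two meridian curves, so the $2\tw(D)$ resulting curves decompose the flat torus $\bdy C$ into cylinders whose cores are meridians. For the $\tw(D)$ cylinders lying between consecutive twist regions along $K$, the area equals $\ell(\mu)\,w_i$, where $\mu$ is the meridian and $w_i$ the height of the cylinder; since every slope on a maximal cusp has length at least $1$, this gives $\area(\bdy C)\ge\sum_i w_i$. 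It therefore suffices to bound each $w_i$ below by a universal constant $\delta_0>0$: this yields $\area(\bdy C)\ge\delta_0(\tw(D)-2)$ once the two cylinders at which the bookkeeping degenerates are discarded, so one may take $A$ essentially equal to $\delta_0$.

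\emph{Lower bound: the spacing estimate.} A cylinder of very small height between two crossing disks would, by sweeping across it, produce either an essential annulus in $S^3\setminus K$ -- impossible, since the manifold is hyperbolic and hence anannular -- or a product region forcing the two disks to be parallel, contradicting twist reducedness. Making this soft dichotomy quantitative is the technical core. For twist regions with many crossings it can be done cleanly: augment exactly these regions, recover $K$ by $1/n_i$ Dehn filling along slopes whose lengths grow with the number of crossings in the region, and apply the effective Dehn filling estimates of Hodgson--Kerckhoff and of Futer--Kalfagianni--Purcell to transfer the knot-cusp geometry, noting that in the augmented link the relevant cylinder has definite height coming from the right-angled structure on its complement. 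What then remains is the case of twist regions with only a bounded number of crossings, where no long filling slope is available.

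\emph{Main obstacle.} That last case is the crux: one must squeeze a definite lower bound on the height of a cusp cylinder out of nothing but incompressibility, anannularity, and twist reducedness, with no geometry to import from an augmented link. Converting ``distinct crossing disks are non-parallel'' into ``distinct crossing disks lie a definite distance apart in the maximal cusp'' requires effective hyperbolic geometry -- Margulis constants, tube radii, and a careful local analysis near such a twist region -- and it is the accumulation of these effective constants, together with the Dehn filling error terms, that accounts both for the difficulty of the theorem and for the explicit, and very far from optimal, value $A\ge 2.278\times 10^{-19}$.
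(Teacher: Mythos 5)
There is a genuine gap, and it sits exactly where you place your ``main obstacle'': the lower bound is never proved. Your reduction asks for a universal constant $\delta_0>0$ bounding below the width of each cusp cylinder cut off by the crossing disks, you supply an argument only for twist regions with many crossings (augmentation plus effective Dehn filling, which essentially reproduces what was already known from Purcell's earlier work), and you openly concede that the bounded-crossing case would need ``effective hyperbolic geometry'' that you do not carry out. That missing case is the entire content of the theorem, and no soft dichotomy (anannularity versus parallelism of crossing disks) is known to quantify in the way you hope; the paper does not attempt any such spacing estimate. Instead it proceeds along a completely different route: it takes the immersed \emph{twisted checkerboard surfaces} of \cite{lp:twcheck} (which are $\pi_1$- and boundary-$\pi_1$-injective for $N\geq 121$), pleats them, shows their cusp area is at least $2^{5/4}\tw(K)$ while their total area is at most $240\pi\,\tw(K)$ (Lemma \ref{lemma:union-cusparea}, Proposition \ref{prop:tw-cusp-area}, Lemma \ref{lemma:euler-char}), and then invokes the new universal result Theorem \ref{thm:arcs}, which produces, on \emph{any} cusped hyperbolic surface, a number of disjoint embedded geodesic arcs of bounded length that is a definite fraction of $|\chi|$ depending only on the cusp density. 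These arcs are pushed into $S^3\setminus K$, Theorem \ref{thm:htpcarcs}/Corollary \ref{cor:numhtpcarcs} bounds by $2(3N-2)=722$ how many can become homotopic there, and Lemma \ref{lemma:shortarcs-volume} (a horoball-shadow circle-packing argument) converts many homotopically distinct short essential arcs into the cusp-area bound with the constant $2.278\times 10^{-19}$. None of these ingredients appears in your sketch, so the proposal cannot be completed as written without essentially inventing the paper's machinery. (Incidentally, the $-2$ in $\tw(D)-2$ comes from Euler characteristic, not from discarding two degenerate cylinders.)

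Your upper bound is also not closed. You correctly note that $\area(\bdy C)=2\vol(C)\le 2\vol(S^3\setminus K)\le 20v_3(\tw(D)-1)$ overshoots $10\sqrt{3}$, but the polyhedral/vertex-link tiling you propose as a substitute is not substantiated (there is no justification that the surviving faces number on the order of $\tw(D)$ with uniformly bounded cusp-torus area). The paper's fix is one line: B\"or\"oczky's cusp density theorem gives $\vol(C)\le \bigl(\sqrt{3}/(2v_3)\bigr)\vol(S^3\setminus K)$, which combined with the Lackenby--Agol--Thurston volume bound yields $\area(\bdy C)<10\sqrt{3}\,(\tw(D)-1)$ directly.
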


The terms twist reduced, twist number, and maximal cusp are defined below.

Theorem \ref{thm:main} has the following almost immediate corollary concerning the length of slopes on a cusp of an alternating knot.

\begin{corollary}\label{cor:longitude}
Every slope, except possibly the meridian, on the maximal cusp of a hyperbolic alternating knot has length at least $B\,(\tw(D)-2)$,
where $B$ is at least $7.593 \times 10^{-20}$ and $D$ is a prime, twist reduced alternating diagram of the knot.
\end{corollary}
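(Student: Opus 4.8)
The plan is to deduce the corollary directly from Theorem~\ref{thm:main} by converting the lower bound on cusp area into a lower bound on slope length. The mechanism is the elementary relation, on a Euclidean torus, between the area, the lengths of two slopes, and their geometric intersection number, combined with an upper bound on the length of the meridian.

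Concretely, first identify $\bdy C$ with $\RR^2/\Lambda$ for a lattice $\Lambda$, so that $\area(\bdy C)$ equals the covolume of $\Lambda$, and each slope $\sigma$ is represented by a primitive vector $v_\sigma\in\Lambda$ whose Euclidean length is the geodesic length $\ell(\sigma)$. Let $\mu$ be the meridian and let $\sigma\neq\mu$ be any other slope. Working in an integral basis of $\Lambda$, the determinant of the pair $(v_\mu,v_\sigma)$ has absolute value equal to the geometric intersection number $i(\mu,\sigma)$, so the Euclidean area of the parallelogram spanned by $v_\mu$ and $v_\sigma$ is $i(\mu,\sigma)\,\area(\bdy C)$; on the other hand that area is at most $\ell(\mu)\,\ell(\sigma)$. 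Since two distinct slopes on a torus meet at least once, $i(\mu,\sigma)\ge 1$, which gives
\[
\ell(\sigma)\;\geq\;\frac{i(\mu,\sigma)\,\area(\bdy C)}{\ell(\mu)}\;\geq\;\frac{\area(\bdy C)}{\ell(\mu)}.
\]
(When $\sigma=\mu$ the intersection number vanishes and the inequality is vacuous, which is exactly why the meridian has to be excluded.)

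It remains to insert two inputs. Theorem~\ref{thm:main} supplies $\area(\bdy C)\geq A\,(\tw(D)-2)$. For the denominator one uses that the meridian of a hyperbolic alternating knot has length at most $3$ on the maximal cusp (established separately, and using the alternating structure; for an arbitrary hyperbolic knot one has only the universal bound $\ell(\mu)<6$ coming from the $6$-theorem, which would still prove a corollary of the same shape but with the constant halved). Combining the two bounds, $\ell(\sigma)\geq\tfrac{A}{3}(\tw(D)-2)$ for every slope $\sigma\neq\mu$, which is the claim with $B=A/3\geq 7.593\times10^{-20}$. The argument is formal once the meridian bound is granted, so the main obstacle is simply having that bound in hand.
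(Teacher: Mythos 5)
Your argument is correct and is essentially identical to the paper's: the paper likewise combines the Euclidean inequality $L(\sigma)\,L(\mu) \geq \area(\bdy C)\,\Delta(\sigma,\mu)$ (citing the proof of Lemma~2.1 of \cite{cooper-lackenby}) with the bound $L(\mu) \leq 3$ for alternating knots from \cite{adams-students:cusp-area} and Theorem~\ref{thm:main}, yielding $B = A/3 \geq 7.593 \times 10^{-20}$. No substantive differences to report.
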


This result has several applications. We mention the following, purely topological consequence.

\begin{theorem}\label{thm:surgeryfiniteness}
For any closed 3-manifold $M$ with sufficiently large Gromov norm, there are at most finitely many prime alternating knots $K$
and fractions $p/q$ such that $M$ is obtained by $p/q$ surgery along $K$.
\end{theorem}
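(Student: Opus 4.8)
The plan is to turn a large value of the Gromov norm $\|M\|$ into a long surgery slope --- which is exactly what Theorem~\ref{thm:main} and Corollary~\ref{cor:longitude} make possible --- and then to recover $S^3\setminus K$ from $M$ via the geometry of long Dehn fillings.

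Suppose $M=S^3_{p/q}(K)$ for a prime alternating knot $K$ with $\|M\|$ larger than a universal constant $G_0$ to be chosen, and let $D$ be a prime, twist reduced alternating diagram of $K$. First I would dispose of the non-hyperbolic case: if $K$ is not hyperbolic then by Menasco's theorem $S^3\setminus K$ is Seifert fibred, hence has zero simplicial volume, and since Dehn filling does not increase simplicial volume this forces $\|M\|=0$, a contradiction; so $S^3\setminus K$ is hyperbolic, with maximal cusp $C$. Next, because the simplicial volume of a finite-volume hyperbolic $3$-manifold equals its volume divided by $v_3$ and does not increase under Dehn filling, while $\mathrm{vol}(S^3\setminus K)$ is bounded above by a linear function of $\tw(D)$ (Lackenby's upper volume bound for alternating link complements), we obtain a lower bound $\tw(D)\ge c\,\|M\|$ for a universal $c>0$; in particular $\tw(D)$ is large. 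The slope $p/q$ cannot be the meridian --- otherwise $M=S^3$ and $\|M\|=0$ --- so Corollary~\ref{cor:longitude} bounds its length on $C$ below by $B(\tw(D)-2)$. Combined with $\mathrm{Area}(\bdy C)<10\sqrt3\,(\tw(D)-1)$ from Theorem~\ref{thm:main}, the \emph{normalized} length $\hat\ell$ of $p/q$ satisfies $\hat\ell^2\ge B^2(\tw(D)-2)^2/(10\sqrt3\,(\tw(D)-1))$, which grows without bound as $\tw(D)\to\infty$, hence as $\|M\|\to\infty$.

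I would then fix $G_0$ so that, for $\|M\|\ge G_0$, the normalized length $\hat\ell$ exceeds the threshold in the cone-deformation theorem of Hodgson--Kerckhoff, in the form used by Futer--Kalfagianni--Purcell. That theorem provides: $M$ is hyperbolic; the core $\gamma$ of the surgery solid torus is isotopic to a closed geodesic with $M\setminus\gamma\cong S^3\setminus K$; and $\gamma$ has an embedded tubular neighbourhood of radius $r$, with $r$ bounded below by a function of $\hat\ell$ that tends to infinity with $\hat\ell$, so that $r\ge r_1$ for a fixed universal constant $r_1>0$. Since such a tube has volume $\pi\,\mathrm{Length}(\gamma)\sinh^2 r\le\mathrm{vol}(M)$, we deduce $\mathrm{Length}(\gamma)\le \mathrm{vol}(M)/(\pi\sinh^2 r_1)=:L_0$, a bound depending on $M$ only --- and, crucially, on the volume of the \emph{closed} manifold $M$, not on $\mathrm{vol}(S^3\setminus K)$.

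To finish, I would use that a closed hyperbolic $3$-manifold has only finitely many closed geodesics of length at most $L_0$, so $\gamma$ lies in a finite set $\gamma_1,\dots,\gamma_m$, whence $S^3\setminus K$ is homeomorphic to one of $M\setminus\gamma_1,\dots,M\setminus\gamma_m$; by the Gordon--Luecke theorem each of these is the complement of at most one knot (up to mirror image), so there are finitely many possibilities for $K$. For each such $K$, only finitely many slopes $p/q$ satisfy $S^3_{p/q}(K)\cong M$: all but finitely many slopes on the fixed Euclidean torus $\bdy C$ are long and yield hyperbolic fillings of volume strictly below $\mathrm{vol}(S^3\setminus K)$ but converging to it as the slope tends to infinity (Thurston's hyperbolic Dehn surgery theorem), so infinitely many slopes producing the same $M$ would force $\mathrm{vol}(M)=\mathrm{vol}(S^3\setminus K)$, which is false. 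Hence only finitely many pairs $(K,p/q)$ arise. I expect the main obstacle to be the second and third paragraphs: extracting from a large Gromov norm not merely a long slope but a long \emph{normalized} slope, and then arranging the resulting bound $L_0$ on $\mathrm{Length}(\gamma)$ to be independent of $K$; the remaining inputs --- Menasco's classification, subadditivity of simplicial volume under filling, the Hodgson--Kerckhoff/Futer--Kalfagianni--Purcell tube estimates, discreteness of the length spectrum of a closed hyperbolic manifold, the Gordon--Luecke theorem, and the concluding volume argument --- are standard.
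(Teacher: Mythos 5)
Your proposal is correct, and its first half coincides with the paper's argument: you rule out the non-hyperbolic case via Menasco together with monotonicity of Gromov norm under Dehn filling, convert $\|M\|$ into a lower bound on $\tw(D)$ using Lackenby's upper volume bound, and then invoke Corollary~\ref{cor:longitude} to make the filling slope long. Where you genuinely diverge is the finiteness step. The paper simply quotes the theorem of Cooper and Lackenby --- for a fixed closed $M$ and any $\epsilon>0$, only finitely many cusped hyperbolic $X$ and slopes $s$ of length at least $2\pi+\epsilon$ satisfy $X(s)=M$ --- and since the slope length exceeds $6.5>2\pi$ once $\|M\|>8.561\times 10^{20}$, the proof ends immediately. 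You instead re-derive the needed finiteness: you upgrade long length to long \emph{normalized} length using the upper bound on $\area(\partial C)$ from Theorem~\ref{thm:main} (so $\hat\ell^2\geq B^2(\tw(D)-2)^2/\bigl(10\sqrt{3}(\tw(D)-1)\bigr)$ grows with $\tw(D)$), feed this into the Hodgson--Kerckhoff cone-deformation machinery to realise the core of the surgery solid torus as a closed geodesic in $M$ with a tube of definite radius, bound its length by $\vol(M)/(\pi\sinh^2 r_1)$, and finish with discreteness of the length spectrum of $M$, Gordon--Luecke, and the standard Thurston volume-convergence argument to get slope finiteness for each of the finitely many knots. What this buys is independence from the Cooper--Lackenby theorem, essentially reproving the special case needed here from effective Dehn filling results; the cost is the extra inputs (Hodgson--Kerckhoff, Gordon--Luecke) and a larger threshold on $\|M\|$, since you need normalized length above the Hodgson--Kerckhoff cutoff rather than length barely above $2\pi$ --- harmless, as the statement only asks for ``sufficiently large'' Gromov norm, but the paper's route is shorter and yields the better explicit constant. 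One small caveat that does not affect your proof: above their normalized-length threshold, Hodgson--Kerckhoff give a \emph{universal} positive lower bound on the tube radius, which is all you actually use; your stronger claim that the radius tends to infinity with $\hat\ell$ would need a further estimate (via the shortness of the core geodesic) and should not be leaned on without citation.
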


This is related to Problem~3.6 in Kirby's list \cite{kirby:problems}, which is attributed to Clark: is there a closed 3--manifold $M$ that can be obtained
by Dehn surgery along infinitely many distinct knots in the 3--sphere? This was answered affirmatively by Osoinach~\cite{osoinach:surgery}, who provided examples of such 3--manifolds $M$ (see also \cite{abe-jong-luecke-osoinach}). However, Theorem~\ref{thm:surgeryfiniteness} gives a negative
answer to the version of Kirby's problem that restricts to prime alternating knots and 3--manifolds with large Gromov norm.

\subsection{Alternating diagrams}
We now recall various definitions concerning alternating diagrams that we will use throughout the paper.

In a diagram of a knot or link, a \emph{twist   region} is a (possibly empty) string of bigon regions in the 4--valent diagram graph, such that the knot is alternating within this region, and such that the string is maximal, in the sense that no other bigons of the diagram lie on either side.  A single crossing adjacent to no bigons is also a twist region.

The \emph{twist number} $\tw(D)$ of a diagram $D$ is its number of twist regions. See Figure \ref{fig:twistdefn}.

\begin{figure}
  \includegraphics[width=2in]{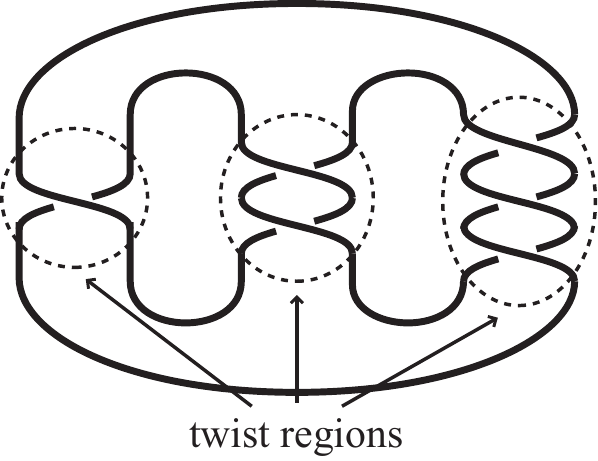}
  \caption{A diagram with twist number $3$. Figure adapted from \cite{lackenby:volume-alt}}
  \label{fig:twistdefn}
\end{figure}

Two crossings are \emph{twist equivalent} if there is a simple closed curve in the projection plane of the diagram
that meets the diagram exactly at the two crossings, and at each of these crossings, runs between opposite regions.

Note that we can apply a sequence of flypes to an alternating diagram to put all twist equivalent crossings together into a single twist region.  When this is done, we say the diagram is \emph{twist   reduced}.  More precisely, we say a diagram is \emph{twist reduced} if for any simple closed curve meeting the diagram at precisely two crossings, and such that, at each of these crossings, it runs between opposite regions, the curve encloses a string of bigons on one side.  By the above observation, every alternating diagram has a twist reduced alternating diagram. See Figure \ref{fig:twistred}.

\begin{figure}
  \includegraphics{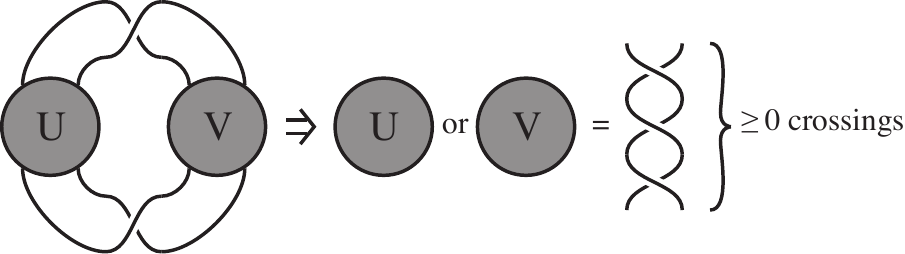}
  \caption{A twist reduced diagram. Figure from \cite{lackenby:volume-alt}}
  \label{fig:twistred}
\end{figure}

The \emph{twist number} $\tw(K)$ of a prime alternating knot $K$ is the number of twist regions in a prime, twist reduced diagram.  
The twist number can be seen to be a well-defined invariant of an alternating knot in several ways, for example by using the invariance of `characteristic squares' under flyping as in Section 4 of \cite{lackenby:volume-alt} and then applying the solution to the Tait Flying Conjecture by Menasco and Thistlethwaite \cite{menasco-thist:classif-alt}, or by relating twist number to the Jones polynomial as in \cite{dasbach-lin:volumish}.

\subsection{Cusps}

Menasco showed that any nonsplit prime alternating link that is not a $(2,q)$--torus link is hyperbolic \cite{menasco}, using results of Thurston on the geometrization of Haken manifolds \cite{thurston:bulletin}.  One important feature of a hyperbolic knot or link complement is its cusp(s), and several key geometric invariants arise from the study of cusps of hyperbolic 3--manifolds.

\begin{define}\label{def:cusps}
If $M$ is a finite-volume hyperbolic 3-manifold, then it has ends of the form $T^2 \times [1, \infty)$.
Each end can be realised geometrically as the image $p(H)$ of some horoball $H$ in $\mathbb{H}^3$,
where $p \colon \mathbb{H}^3 \rightarrow M$ is the covering map. This is known as a \emph{cusp}.
For each end, there is a 1-parameter family of cusps that are obtained from each other by 
changing the choice of horoball $H$, but keeping the same limiting point on the sphere at infinity.
If the cusps are expanded in the hyperbolic manifold until each is tangent to itself or another, 
the result is called a choice of \emph{maximal cusps}.  When the manifold has just one end, 
for example in the case of a hyperbolic knot complement, then there is a unique maximal cusp.  
When we speak of \emph{the} cusp volume of a hyperbolic alternating knot, we mean the volume of this maximal cusp. 
Similarly, the \emph{cusp area} is the Euclidean area of the torus that is the boundary of the maximal cusp.
By an exercise in hyperbolic geometry, the cusp area is twice the cusp volume.

Finally, the \emph{cusp density} of $M$ is defined to be the largest possible volume in a choice of
maximal cusps divided by the total volume of the manifold.
\end{define}

\subsection{Cusp density}

Our results have interesting consequences for cusp density.  By work of B\"or\"oczky \cite{boroczky}, the cusp density of a hyperbolic knot or link complement is at most $\sqrt{3}/(2v_3)$, where $v_3 \approx 1.01494$ is the volume of an ideal regular tetrahedron. The cusp density of the figure--8 knot realizes this bound \cite{thurston}.  It follows from work of Eudave-Mu\~noz and Luecke that general knots in $S^3$ may have arbitrarily small cusp density \cite{eudave-munoz-luecke}.  However, Theorem \ref{thm:main} implies that alternating knots have universally bounded cusp density.

\begin{corollary}\label{cor:cusp-density}
The cusp density of an alternating knot has a universal lower bound.  
\end{corollary}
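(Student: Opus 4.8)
The plan is to bound the cusp density from below by dividing the maximal cusp volume---which is half the cusp area, and which Theorem~\ref{thm:main} bounds below by a linear function of $\tw(D)$---by an upper bound for the total volume that is also linear in $\tw(D)$.

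Two ingredients are needed. The first is the volume bound of Lackenby \cite{lackenby:volume-alt}: if $D$ is a prime alternating diagram of a hyperbolic link then $\vol(S^3\setminus K)<10\,v_3\,(\tw(D)-1)$, where $v_3$ is the volume of a regular ideal tetrahedron (only the linear growth in $\tw(D)$ will matter). The second is the standard fact that the boundary torus of a maximal cusp of any orientable cusped hyperbolic $3$-manifold has area at least $\sqrt3/2$, hence the maximal cusp has volume at least $\sqrt3/4$; I would prove this by normalizing the maximal cusp to be $\{t\ge 1\}$ in the upper half-space model, noting that maximality forces a horoball of Euclidean diameter $1$ tangent to $\{t\ge 1\}$, observing that its translates under the cusp group are then pairwise disjoint horoballs of diameter $1$ so that the translation lattice of the cusp has no nonzero vector shorter than $1$, and concluding that its covolume---equal to the cusp area---is at least $\sqrt3/2$.

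Now let $K$ be a hyperbolic alternating knot with a prime, twist reduced alternating diagram $D$, put $t=\tw(D)$, and let $C$ be the maximal cusp. Since $K$ is not a $(2,q)$-torus knot we have $t\ge 2$. If $t=2$, the first ingredient bounds $\vol(S^3\setminus K)$ by $10v_3$ and the second bounds $\vol(C)$ below by $\sqrt3/4$, so the cusp density exceeds $\sqrt3/(40v_3)$. If $t\ge 3$, then $t-2\ge(t-1)/2$, so Theorem~\ref{thm:main} gives $\vol(C)=\tfrac12\area(\bdy C)\ge\tfrac{A}{2}(t-2)\ge\tfrac{A}{4}(t-1)$, and dividing by the volume bound shows the cusp density exceeds $A/(40v_3)$. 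In either case it is at least the universal positive constant $A/(40v_3)$.

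The only point requiring care is that Theorem~\ref{thm:main} is vacuous exactly when $t=2$, and there are infinitely many hyperbolic alternating knots with $t=2$ (for instance the twist knots); thus the universal lower bound on the volume of a maximal cusp is genuinely needed to handle diagrams with few twist regions, while Theorem~\ref{thm:main} does the work for diagrams with many. Beyond assembling these ingredients I do not anticipate any real obstacle.
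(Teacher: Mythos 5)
Your proposal is correct, and its core is the same as the paper's: the paper proves the corollary in one sentence by dividing the lower bound $\vol(C)\geq (A/2)(\tw(K)-2)$ from Theorem~\ref{thm:main} by the Lackenby--Agol--Thurston upper bound $\vol(S^3\setminus K)\leq 10v_3(\tw(K)-1)$. Where you differ is that you explicitly patch the case $\tw(D)=2$, for which the Theorem~\ref{thm:main} bound is vacuous and the paper's one-line argument, read literally, gives only the trivial bound $0$; since twist knots give infinitely many hyperbolic alternating knots with $\tw=2$, this case genuinely needs an extra input. Your extra ingredient --- that the maximal cusp of a one-cusped orientable hyperbolic $3$-manifold has boundary area at least $\sqrt{3}/2$, hence volume at least $\sqrt{3}/4$ --- is correct: maximality produces a diameter-one horoball tangent to the horoball $\{t\geq 1\}$, its nontrivial translates under the (parabolic, rank-two) cusp group are distinct horoballs with pairwise disjoint interiors, so tangency points are at distance at least $1$, the translation lattice has shortest vector at least $1$, and the Hermite bound in dimension two gives covolume at least $\sqrt{3}/2$. (Equivalently, you could quote the fact, already cited in the paper, that every slope on a maximal cusp has length at least $1$.) Dividing by $10v_3$ handles $\tw=2$, and your observation $\tw-2\geq(\tw-1)/2$ for $\tw\geq 3$ recovers the paper's bound $A/(40v_3)$ in the remaining cases. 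So your write-up is, if anything, more complete than the paper's; the only cost is the additional (standard) cusp-area lemma, which the paper does not need to invoke because it elides the $\tw=2$ case.
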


This is a consequence of Theorem \ref{thm:main}, because the cusp volume of $S^3 \setminus K$ is at least $(A/2)(\tw(K)-2)$,
whereas it is a theorem of Lackenby, with improvements due to Agol and D. Thurston \cite{lackenby:volume-alt},
that the hyperbolic volume of $S^3 \setminus K$ is at most $10 v_3 (\tw(K) - 1)$.

\subsection{Slope length}

\begin{define}
A \emph{slope} on a torus is an isotopy class of essential simple closed curves. If $M$ is a compact 3-manifold
with interior admitting a complete, finite volume hyperbolic structure, and $C$ is a choice of maximal cusps,
then a slope $\sigma$ on $\partial M$ has an associated \emph{length}. This is defined to be the length of any
Euclidean geodesic representative for $\sigma$ on $\partial C$.
\end{define}

Cusp area and slope length are closely related, since high cusp area forces every slope, except possibly one,
to be long.

In the case of the meridian slope for a knot in the 3-sphere, its length is well known to be bounded, but nevertheless,
several interesting questions about meridian length remain open. It was shown by Thurston \cite{thurston} that
with respect to any maximal cusp, the length of any slope is at least one.
 By work of Agol and Lackenby \cite{agol:bounds, lackenby:word}, the length of the meridian of a knot in $S^3$ can be no more than six.  For an alternating knot, the meridian has length less than three \cite{adams-students:cusp-area}.  In fact, no alternating knots have been found with meridian length more than two \cite{adams:hyperbolic-knots}.

While the meridian of a knot in $S^3$ has universally bounded length, the next shortest curve on a cusp does not.  Several different results in the past gave evidence that for alternating knots, its length is bounded below by the twist number.  For example, Lackenby showed that, in a combinatorial metric on the cusp torus, the second shortest curve had length bounded below by a linear function of the twist number \cite{lackenby:word}.  Unfortunately, the combinatorial metric that Lackenby used cannot be applied to the Euclidean metric on the torus in any known way, and so Lackenby's result does not give geometric information on the cusp.  Purcell showed that for knots with a high number of crossings in each twist region, including alternating knots, the second shortest curve had length bounded below by a constant times the twist number \cite{purcell:cusps}.  However, this result gave no information about knots with a small number of crossings in any twist region.  Futer, Kalfagianni, and Purcell showed the cusp area is bounded below by twist number for 2--bridge knots \cite{fkp:farey}, which are a subset of alternating knots.

Corollary \ref{cor:longitude} gives that the length of every non-meridional slope for a hyperbolic alternating knot must be long, unless the knot lies in one of the limited families with bounded twist number. Corollary~\ref{cor:longitude} is an immediate consequence of Theorem~\ref{thm:main}, as follows. For suppose that $\sigma$ is some non-meridional slope on the boundary of a maximal cusp $C$, and let $\mu$ be the meridian. Let $L(\sigma)$ and $L(\mu)$ be their lengths, and denote the modulus of their intersection number by $\Delta(\sigma, \mu)$. Then, an elementary calculation in Euclidean geometry (for example as in the proof of Lemma~2.1 in \cite{cooper-lackenby}) gives that
\[ L(\sigma) \ L(\mu) \geq \area(\bdy C) \ \Delta(\sigma, \mu). \]
The length of $\mu$ is at most $3$, by \cite{adams-students:cusp-area}, and so we deduce that
\[ L(\sigma) \geq \frac{ \area(\bdy C) \ \Delta(\sigma, \mu) }{ L(\mu) } \geq B \, (\tw(K)-2), \]
where $B$ is greater than $7.593 \times 10^{-20}$.

\subsection{Applications to Dehn surgery}

The above results on slope length are significant because slope length has important consequences for Dehn surgery. It is a well--known consequence of the 6--theorem of Agol \cite{agol:bounds} and Lackenby \cite{lackenby:word}, together with Perelman's proof of the geometrisation conjecture, that if $M$ is a finite--volume hyperbolic 3--manifold and $s_1, \dots, s_k$ are slopes on distinct components of $\partial M$, such that, with respect to some horoball neighbourhood $C$ of the cusps, each $s_i$ has length more than 6, then the manifold $M(s_1, \dots, s_k)$ obtained by Dehn filling along these slopes is hyperbolic. It was shown by Futer, Kalfagianni and Purcell \cite{fkp:fillingvolume} that if these slopes all have length at least $\ell_{\rm min} > 2 \pi$ on $\partial C$, then the hyperbolic volume of
$M(s_1, \dots, s_k)$  satisfies
\[ \vol(M(s_1, \dots, s_k)) \geq \left (1 - \left ( \frac{2 \pi}{ \ell_{\rm min}} \right )^2 \right ) ^{3/2}
\vol(M). \]
Therefore, applying Corollary \ref{cor:longitude}, we deduce the  second inequality in the following result.

\begin{theorem}\label{thm:volumesurgery}
Let $K$ be a hyperbolic alternating knot. If $\tw(K) \geq 1.361 \times 10^{20}$, then any manifold $M$ obtained by a non-trivial surgery along $K$ is hyperbolic and satisfies
\[
\frac{v_8}{2}(\tw(K)/2 -1) \leq \frac{1}{2} \vol(S^3 \setminus K)
\leq \vol(M) < \vol(S^3 \setminus K) \leq 10 v_3 (\tw(K) - 1),
\]
where $v_8 \approx 3.66386$ is the volume of regular hyperbolic ideal octahedron, and $v_3 \approx 1.01494$ is the volume of a regular hyperbolic ideal tetrahedron.
\end{theorem}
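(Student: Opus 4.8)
The plan is to assemble Theorem~\ref{thm:volumesurgery} from three previously stated or quoted inputs: Theorem~\ref{thm:main}, Corollary~\ref{cor:longitude}, and the volume estimates of Futer--Kalfagianni--Purcell~\cite{fkp:fillingvolume} and Lackenby--Agol--D.~Thurston~\cite{lackenby:volume-alt}. First I would record the chain of inequalities I want to prove, reading left to right: a lower bound $\tfrac{v_8}{2}(\tw(K)/2-1)$ on $\tfrac12\vol(S^3\setminus K)$; then $\tfrac12\vol(S^3\setminus K)\le\vol(M)$ from the volume-drilling/filling estimate; then $\vol(M)<\vol(S^3\setminus K)$ by Thurston's theorem that volume strictly decreases under Dehn filling; and finally $\vol(S^3\setminus K)\le 10v_3(\tw(K)-1)$, which is exactly the Lackenby--Agol--D.~Thurston bound. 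The hyperbolicity of $M$ for every non-trivial slope comes from Corollary~\ref{cor:longitude}: once $\tw(K)-2$ is large enough that $B(\tw(K)-2)>6$, every non-meridional slope is longer than $6$, and non-trivial surgery on a knot never fills along the meridian, so the $6$-theorem applies.

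Next I would pin down the numerical threshold. With $B\ge 7.593\times 10^{-20}$, the requirement $B(\tw(K)-2)>2\pi$ (the hypothesis needed for the FKP volume estimate, which is the binding constraint since $2\pi>6$) forces $\tw(K)-2 > 2\pi/B$, and one checks $2\pi/(7.593\times10^{-20})$ is just under $1.361\times 10^{20}$, which is where the stated hypothesis $\tw(K)\ge 1.361\times 10^{20}$ comes from. Under this hypothesis, write $\ell_{\min}$ for the minimum length of a non-meridional slope on $\partial C$; then $\ell_{\min}\ge B(\tw(K)-2)$, so $2\pi/\ell_{\min}\le 2\pi/(B(\tw(K)-2))$, which under the hypothesis is at most $1/\sqrt[3]{2}$ (indeed it is vastly smaller), whence $(1-(2\pi/\ell_{\min})^2)^{3/2}\ge 1/2$. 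Feeding this into the FKP inequality $\vol(M)\ge(1-(2\pi/\ell_{\min})^2)^{3/2}\vol(S^3\setminus K)$ gives $\vol(M)\ge\tfrac12\vol(S^3\setminus K)$, the central inequality. (I would remark that the constant $1/2$ is extremely wasteful — the true factor is almost $1$ — but $1/2$ is all that is claimed and it keeps the statement clean.)

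For the leftmost inequality I combine $\tfrac12\vol(S^3\setminus K)\ge\tfrac12\cdot\tfrac{v_8}{2}?$ — here I need a \emph{lower} bound on $\vol(S^3\setminus K)$ in terms of $\tw(K)$. This is the one ingredient not explicitly displayed in the excerpt, but it is the companion lower bound to the upper bound quoted from~\cite{lackenby:volume-alt}: Lackenby proved (with the octahedral constant $v_8$, after the Agol--D.~Thurston improvement concerns only the upper bound) that a hyperbolic alternating knot with a prime twist-reduced diagram of twist number $t$ satisfies $\vol(S^3\setminus K)\ge v_8\,(t/2-1)$; see~\cite{lackenby:volume-alt}. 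Granting this, $\tfrac12\vol(S^3\setminus K)\ge\tfrac{v_8}{2}(\tw(K)/2-1)$ is immediate. I expect the main obstacle to be purely expository rather than mathematical: making sure the arithmetic on the threshold $1.361\times10^{20}$ is internally consistent with the constant $B$ from Corollary~\ref{cor:longitude}, and citing the lower volume bound from~\cite{lackenby:volume-alt} in a form that matches the $v_8(\tw(K)/2-1)$ appearing in the statement. Everything else is a one-line invocation of a named theorem.

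\begin{proof}
By Corollary~\ref{cor:longitude}, every slope on the maximal cusp $C$ of $S^3\setminus K$ other than the meridian has length at least $B(\tw(K)-2)$ with $B\ge 7.593\times 10^{-20}$. A non-trivial surgery along $K$ fills along a slope other than the meridian, so the slope $s$ used has length $L(s)\ge B(\tw(K)-2)$. Since $\tw(K)\ge 1.361\times 10^{20}$, we have $B(\tw(K)-2) > 2\pi > 6$, so by the $6$-theorem of Agol~\cite{agol:bounds} and Lackenby~\cite{lackenby:word} together with Perelman's geometrisation theorem, $M = (S^3\setminus K)(s)$ is hyperbolic.

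Set $\ell_{\min} = L(s) \ge B(\tw(K)-2)$. From $\tw(K)\ge 1.361\times 10^{20}$ one computes $2\pi/\ell_{\min} \le 2\pi/(B(\tw(K)-2))$, and this quantity is so small that $(1-(2\pi/\ell_{\min})^2)^{3/2} \ge 1/2$. Hence, by the volume estimate of Futer, Kalfagianni and Purcell~\cite{fkp:fillingvolume},
\[
\vol(M) \ \geq\ \left(1 - \left(\frac{2\pi}{\ell_{\min}}\right)^2\right)^{3/2}\vol(S^3\setminus K)\ \geq\ \frac{1}{2}\vol(S^3\setminus K).
\]
On the other hand, by Thurston's theorem that hyperbolic volume strictly decreases under Dehn filling, $\vol(M) < \vol(S^3\setminus K)$. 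Finally, by the volume bounds for alternating knots of Lackenby, with the lower bound in terms of the regular ideal octahedron and the upper bound improved by Agol and D.~Thurston~\cite{lackenby:volume-alt},
\[
v_8\,\Bigl(\frac{\tw(K)}{2}-1\Bigr)\ \leq\ \vol(S^3\setminus K)\ \leq\ 10\,v_3\,(\tw(K)-1).
\]
Combining the four inequalities yields
\[
\frac{v_8}{2}\Bigl(\frac{\tw(K)}{2}-1\Bigr)\ \leq\ \frac{1}{2}\vol(S^3\setminus K)\ \leq\ \vol(M)\ <\ \vol(S^3\setminus K)\ \leq\ 10\,v_3\,(\tw(K)-1),
\]
as claimed.
\end{proof}
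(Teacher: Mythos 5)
Your proof assembles the theorem exactly as the paper does: Corollary~\ref{cor:longitude} plus the 6--theorem (and geometrisation) for hyperbolicity of $M$, the Futer--Kalfagianni--Purcell estimate to get $\vol(M)\geq\tfrac12\vol(S^3\setminus K)$, Thurston's strict decrease of volume under Dehn filling, and the twist-number volume bounds for alternating knots for the two outer inequalities; this is precisely the argument sketched around the theorem in the paper. Two corrections are worth making. First, your account of the threshold is off: $2\pi/B\approx 8.3\times 10^{19}$, not $1.361\times 10^{20}$, and the binding constraint is not $\ell_{\min}>2\pi$ but the requirement $\bigl(1-(2\pi/\ell_{\min})^2\bigr)^{3/2}\geq\tfrac12$, which needs $2\pi/\ell_{\min}\leq\sqrt{1-2^{-2/3}}\approx 0.608$ (your sufficient condition $2\pi/\ell_{\min}\leq 1/\sqrt[3]{2}\approx 0.794$ is not sufficient); with $\tw(K)=1.361\times 10^{20}$ one gets $\ell_{\min}\geq B(\tw(K)-2)\approx 10.33$ and the factor is about $0.5004$, so the inequality you assert is true but only by a hair --- ``this quantity is so small'' should be replaced by the actual computation, since the hypothesis was chosen to make the factor essentially equal to $\tfrac12$. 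Second, the lower bound $v_8(\tw(K)/2-1)\leq\vol(S^3\setminus K)$ is due to the Agol--Storm--W.~Thurston improvement \cite{agol-storm-thurston} of Lackenby's bound, not to \cite{lackenby:volume-alt} alone, which is how the paper cites it.
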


Two of the remaining inequalities are upper and lower bounds on the volume of hyperbolic alternating link complements, due to Lackenby \cite{lackenby:volume-alt}, with improvements by Agol and D. Thurston \cite{lackenby:volume-alt}, and Agol, Storm and W. Thurston \cite{agol-storm-thurston}. The third inequality is the fact that hyperbolic volume decreases when Dehn filling is performed \cite{thurston}.

We can also deduce Theorem \ref{thm:surgeryfiniteness} from Theorem \ref{thm:main}, together with a theorem of Cooper and Lackenby \cite{cooper-lackenby}, which asserts that, for any $\epsilon > 0$ and any closed orientable 3-manifold $M$, there are at most finitely many cusped hyperbolic 3-manifolds $X$ and slopes $s$ on $\partial X$ with length at least $2 \pi + \epsilon$, such that $M$ is obtained by Dehn filling $X$ along $s$. 

Let $M$ be a closed 3--manifold with Gromov norm more than $8.561 \times 10^{20}$. Suppose that $M$ is obtained by $p/q$ surgery along a prime alternating knot $K$. Let $X$ be the exterior of $K$. Then $X$ is hyperbolic, and since Gromov norm does not increase when Dehn filling is performed, the Gromov norm of $X$ is also at least $8.561 \times 10^{20}$. Since Gromov norm and hyperbolic volume are proportional, the volume of $X$ is at least $8.561 \times 10^{20}  v_3$. Hence, by Lackenby's theorem, with improvements due to Agol and D.~Thurston~\cite{lackenby:volume-alt}, the twist number of $K$ is at least $8.561 \times 10^{19}$. So, by Corollary~\ref{cor:longitude}, the length of the filling slope $p/q$ is at least $(7.593 \times 10^{-20}) \times (8.561 \times 10^{19}-2) > 6.5$. Setting $\epsilon = 6.5 - 2\pi$, and applying the theorem of Cooper and Lackenby, we obtain Theorem~\ref{thm:surgeryfiniteness}.

\subsection{Crossing arcs}

In his experimental analysis of the hyperbolic structures of alternating knot complements, 
Thistlethwaite observed several geometric features
that are related to cusp area, and formulated various conjectures. To explain one of these conjectures, we need the following definitions.

\begin{define}\label{def:length-arc}
Let $\alpha$ be an arc in $S^3$, such that $\alpha \cap K = \partial \alpha$. Suppose that $\alpha$ is homotopic in $S^3\setminus K$ to a geodesic $\gamma$. A maximal cusp for $S^3 \setminus K$ intersects $\gamma$ in two half-open intervals which contain the ends of $\gamma$, together possibly with some closed intervals. Define the \emph{length} of $\alpha$ to be the length of the arc that results from removing the two half-open intervals from $\gamma$.
\end{define}

\begin{define}\label{def:crossing-arc}
A \emph{crossing arc} is defined to be an embedded arc in $S^3$ with endpoints on $K$, which projects to a single point lying at a crossing in the diagram of $K$.
\end{define}

Thistlethwaite conjectured that crossing arcs in alternating knots have universally bounded length, with length bounded above by $\log 8$ \cite{thistle:length}.  Since two crossing arcs are homotopic in $S^3\setminus K$ only if they belong to the same twist region (as shown in Proposition 7.12 of \cite{lp:twcheck}), and since short arcs contribute to cusp area, Thistlethwaite's conjecture would imply Theorem \ref{thm:main}.  In fact, our proof of Theorem \ref{thm:main} hinges on the fact that many distinct geodesic arcs in $S^3 \setminus K$ have bounded length. However, we cannot show that crossing arcs are short, but expect this to be the case. Therefore, we offer Theorem \ref{thm:main} as additional evidence for Thistlethwaite's conjecture.

\subsection{Organization}
The results in this paper are organized as follows.  In Section~\ref{sec:bounded}, we prove the main result for alternating knots with a bounded number of crossings per twist region.  This case is simpler to explain than the general case, and yet introduces the main tools and techniques we will use in general, particularly Theorem~\ref{thm:arcs}.  We apply this theorem to the checkerboard surfaces of the diagram.  In Section~\ref{sec:twisted}, we remind the reader of an essential immersed surface, originally appearing in~\cite{lp:twcheck}, which will play the role of the checkerboard surfaces in the general case.  We use this to give a proof of Theorem~\ref{thm:main}. 

\subsection{Acknowledgements}
Purcell is supported in part by NSF grant DMS--1252687, and by a Sloan Research Fellowship.

\section{Bounded crossings in each twist region}\label{sec:bounded}
In this section, we prove a version of the main theorem in the case that our alternating knot has a bounded number of crossings per twist region.  That is, we show that such knots have cusp volume bounded above and below by linear functions of the twist number.

We work through this case separately because it is somewhat simpler than the most general case, and also because it allows us to develop and apply many of the tools we will use in the general setting.

\subsection{Checkerboard surfaces}

Recall the checkerboard surfaces of an alternating link complement are obtained by coloring the regions of the diagram in a checkerboard manner, in blue and red, say, and then attaching twisted bands at crossings to connect regions of the same color.  Note that the checkerboard surfaces intersect only at crossings, and then in a single arc running from the strand of the link at the top of the crossing to the strand of the link at the bottom. This is a crossing arc, as in Definition \ref{def:crossing-arc}.

The following result is due to Menasco and Thistlethwaite \cite{menasco-thist:classif-alt}.  

\begin{lemma}\label{lemma:checkerboard-essential}
The checkerboard surfaces arising from a prime alternating diagram are essential.  
\end{lemma}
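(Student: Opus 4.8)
The plan is to show that each checkerboard surface $S$ of a prime alternating diagram is incompressible and boundary-incompressible, since these are exactly the two conditions packaged in the word ``essential.'' The main tool is a normal-form argument for surfaces with respect to the alternating diagram, controlling how a compressing or boundary-compressing disk meets the two-sphere containing the projection, in the spirit of Menasco's original analysis of alternating link complements. First I would set up the standard picture: place the diagram on a projection sphere $S^2 \subset S^3$, push the over- and under-strands slightly to either side, and record the link complement as the union of two balls (above and below $S^2$) glued along $S^2$ minus the crossing balls. A checkerboard surface, say the blue one, then meets the ball above $S^2$ in a union of disks (one for each blue region) and meets the crossing balls in twisted bands; symmetrically on the other side one of the two surfaces behaves nicely with respect to the upper ball and the other with respect to the lower ball, so the argument must be run with care about which surface and which ball is used.

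The key steps, in order, are: (1) assume $E$ is a compressing disk for $S$ and isotope it into normal position so that $E \cap S^2$ is a collection of simple closed curves and arcs in general position, with the curves of intersection meeting the crossing balls in a controlled way; (2) run an innermost-disk / outermost-arc argument on $E \cap S^2$ to remove intersection curves that bound disks in $S^2$ missing the diagram, and to straighten the combinatorics at crossings, using primeness of the diagram to rule out the problematic configurations (this is exactly where a non-prime diagram would obstruct the argument); (3) conclude that $E$ can be isotoped to lie entirely in one of the two balls, where it is visibly a trivial disk because the intersection of $S$ with that ball is a disjoint union of disks, hence $\partial E$ bounds a disk in $S$, contradicting that $E$ was a compressing disk; (4) repeat the analogous argument with $E$ replaced by a boundary-compressing disk, tracking the extra boundary arc on $\partial N(K)$ and again using primeness to force a contradiction. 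Since the diagram is prime (and connected, being a diagram of a knot), every innermost curve or outermost arc on $S^2$ cut off by the diagram must be trivial, which is what drives the reduction.

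The main obstacle is step (2): setting up the right notion of normal or standard position for the compressing disk relative to the alternating diagram and the crossing balls, and verifying that the finitely many local configurations at a crossing can always be simplified. One has to be careful that an isotopy reducing $|E \cap S^2|$ does not create new, worse intersections, and that the bigon/half-disk exchange moves at crossings terminate; the bookkeeping here is the technical heart of Menasco and Thistlethwaite's method. I would organize this by a complexity argument — lexicographically ordering $(|E \cap S^2|, \text{number of saddles in crossing balls})$ — and showing that if the disk is not already in a ball, some move strictly decreases this complexity. Once the disk lies in a single ball, the conclusion is immediate from the product structure of $S$ in that ball, so the bulk of the write-up is the reduction lemma, after which incompressibility and boundary-incompressibility follow in parallel.
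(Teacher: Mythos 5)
The paper does not prove this lemma at all: it is quoted directly from Menasco and Thistlethwaite, so the only meaningful comparison is with the standard crossing-ball argument from the literature, which is exactly what you are sketching. Your outline does follow that route, but as it stands there are two genuine gaps. The more substantive one is that you interpret ``essential'' as the absence of \emph{embedded} compressing and boundary-compressing disks. What the paper actually needs (in order to pleat $S$ and to homotope essential arcs of $S$ to geodesics in $S^3\setminus K$) is $\pi_1$-injectivity and boundary-$\pi_1$-injectivity. For a two-sided surface the loop theorem makes these equivalent, but a checkerboard surface of a knot is in general one-sided: at least one of the two has nonzero integral boundary slope (this is used in Lemma 2.4), hence is non-orientable. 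For one-sided surfaces, ruling out embedded compressing disks is strictly weaker than $\pi_1$-injectivity, so your steps (1)--(4) must instead be run either for singular disks or, as is standard, for compressing and boundary-compressing disks of the tubular boundary $\partial N(S)$ in the exterior of $N(S)$. That changes the bookkeeping in a real way, since the relevant boundary curves then live on $\partial N(S)$ rather than on $S$, and it is precisely where the machinery of Menasco--Thistlethwaite does work that your sketch does not.

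The second gap is that step (2), the normal-position reduction with its complexity argument, local analysis at crossing balls, and termination, is the entire content of the theorem and is only promised, not carried out; the appeal to primeness is named but never instantiated in a concrete configuration. The setup is also slightly off: a checkerboard surface is not transverse to the projection sphere --- away from the crossing balls it lies \emph{in} $S^2$ --- so ``normal position'' for the disk has to be formulated with respect to the decomposition of the complement cut along the surface (equivalently, the two polyhedra obtained by cutting along both checkerboard surfaces), not via naive transversality of $E$ with $S^2$, and the claim that the surface meets the upper ball in one disk per shaded region needs to be replaced by this polyhedral picture. All of this is fixable, and it is essentially what the cited source does, but as written your proposal is a plan for the Menasco--Thistlethwaite proof rather than a proof.
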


Any essential surface in a hyperbolic 3--manifold can be given a pleating (see Thurston \cite{thurston}, or for a more detailed proof, see Canary, Epstein, and Green \cite{canary-epstein-green}, or Lackenby \cite{lackenby:word}).  We pleat both checkerboard surfaces in $S^3\setminus K$.  This gives an induced hyperbolic metric on the disjoint union of the two checkerboard surfaces.

\begin{lemma}\label{lemma:checkerboard-area}
Let $S$ denote the disjoint union of the two checkerboard surfaces for the alternating knot $K$ with a prime, twist reduced diagram, which we also denote by $K$.  Then the Euler characteristic of $S$ is $\chi(S) = 2-\Cr(K)$,
where $\Cr(K)$ is the number of crossings of the diagram. 
Thus under the hyperbolic metric obtained from pleating $S$ in $S^3 \setminus K$, the area of $S$ is $\area(S) = 2\pi(\Cr(K) - 2)$.
\end{lemma}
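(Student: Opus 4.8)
The plan is to compute $\chi(S)$ directly from the cell structure that the diagram graph induces on the two checkerboard surfaces, and then to invoke Gauss--Bonnet for the pleated (hence hyperbolic, with geodesic boundary) metric. First I would set up notation: let $D$ be the prime alternating diagram of $K$ on $S^2$, with $\Cr(K)$ crossings. The diagram graph is $4$-valent, so it has $V=\Cr(K)$ vertices, $E=2\Cr(K)$ edges, and therefore $F=2-V+E = \Cr(K)+2$ complementary regions on $S^2$. These regions are two-colored, blue and red; say there are $F_b$ blue and $F_r$ red regions, with $F_b+F_r=\Cr(K)+2$.

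Next I would compute the Euler characteristic of each checkerboard surface as a CW-complex. The blue surface $S_b$ is built from the $F_b$ blue regions (as $2$-cells, after shrinking each region slightly) joined by a half-twisted band at each of the $\Cr(K)$ crossings. Up to homotopy this is the same as taking one $0$-cell at each crossing, one $1$-cell along each edge of the diagram that borders a blue region --- but each edge borders exactly one blue region and one red region, so there are $2\Cr(K)$ such edge-sides total and hence... more cleanly: $S_b$ deformation retracts onto the spine consisting of the $\Cr(K)$ crossing points together with, for each blue region, a cone on its boundary cycle; the standard count gives $\chi(S_b) = \Cr(K) - 2\Cr(K) + F_b = F_b - \Cr(K)$, because there are $\Cr(K)$ vertices (the crossings), $2\Cr(K)$ edges (two sides of the surface meet each edge of $D$, but only one side lies in $S_b$; each of the $2\Cr(K)$ half-edges at the four corners of the crossings contributes — I would pin the count down so that it yields $2\Cr(K)$ edges), and $F_b$ faces. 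Symmetrically $\chi(S_r) = F_r - \Cr(K)$. Adding, $\chi(S) = \chi(S_b)+\chi(S_r) = (F_b+F_r) - 2\Cr(K) = (\Cr(K)+2) - 2\Cr(K) = 2 - \Cr(K)$, as claimed. (An alternative, cleaner derivation: $S_b \cup S_r$ with the crossing arcs glued is a spine of $S^3 \setminus K$, or one can use that $S_b$ and $S_r$ together with the link form a cell decomposition of $S^3$; I would pick whichever bookkeeping is least error-prone in writing.)

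For the area statement, recall that pleating an essential surface $S$ in a hyperbolic $3$-manifold (Lemma~\ref{lemma:checkerboard-essential} guarantees the checkerboard surfaces are essential, so this applies) realizes $S$ as a surface with an intrinsic complete hyperbolic metric: it is a union of ideal hyperbolic polygons, so it has constant curvature $-1$ and its "boundary" components (which run out to the cusp of $S^3\setminus K$) are cusps of the surface, not geodesic boundary. By Gauss--Bonnet, $\area(S) = -2\pi\,\chi(S) = -2\pi(2-\Cr(K)) = 2\pi(\Cr(K)-2)$. I would note that $\chi(S)<0$ here since $\Cr(K)\geq 3$ for any prime alternating knot diagram, so the area is positive as it must be.

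The only genuine obstacle is the Euler characteristic bookkeeping for a single checkerboard surface --- getting the edge count exactly right so that the two colors' contributions sum correctly --- and I expect to handle it by the "spine of $S^3\setminus K$" viewpoint rather than a face-by-face count: the two checkerboard surfaces, glued along the $\Cr(K)$ crossing arcs, form a $2$-complex that $S^3\setminus K$ deformation retracts onto, and since $S^3\setminus K$ has $\chi=0$ a Mayer--Vietoris / inclusion-exclusion argument on this union gives $\chi(S_b)+\chi(S_r) - \Cr(K) = 0$ (the $\Cr(K)$ crossing arcs are the intersection, each contractible), whence $\chi(S)=\chi(S_b)+\chi(S_r)=\Cr(K)$... which has the wrong sign, so in fact the correct statement is that the union is a spine of the *knot complement* and one must be careful that the crossing arcs are the intersection $S_b \cap S_r$; redoing this gives $0 = \chi(S^3\setminus K) = \chi(S_b) + \chi(S_r) - \chi(S_b\cap S_r) = \chi(S) - \Cr(K)$, i.e. $\chi(S) = \Cr(K)$ — still wrong. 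The resolution, and the point I would be most careful about, is that $S_b\cup S_r\cup K$ (not the complement) is what decomposes $S^3$, giving $2 = \chi(S^3) = \chi(S_b)+\chi(S_r) - \chi(\text{crossing arcs}) + (\text{contributions from }K)$; I would therefore instead just do the honest CW count on each surface as sketched in the second paragraph, where each $S_b$ is visibly $F_b$ disks banded together at $\Cr(K)$ crossings via bands, giving $\chi(S_b) = F_b - \Cr(K)$ immediately (each band drops $\chi$ by $1$), and likewise for $S_r$; summing and using $F_b+F_r = \Cr(K)+2$ finishes it cleanly without any appeal to the ambient manifold.
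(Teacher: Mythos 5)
Your argument is correct, and the essential count stands on its own: each checkerboard surface is its diagram regions (disks) joined by one half-twisted band per crossing, so $\chi(S_b)=F_b-\Cr(K)$ and $\chi(S_r)=F_r-\Cr(K)$, while Euler's formula for the $4$-valent diagram graph on $S^2$ gives $F_b+F_r=\Cr(K)+2$; summing yields $\chi(S)=2-\Cr(K)$, and Gauss--Bonnet for the pleated (finite-area hyperbolic, cusped) metric gives $\area(S)=2\pi(\Cr(K)-2)$, exactly as in the paper's final step. The route to the Euler characteristic, however, differs from the paper's. The paper avoids all counting by a cut-and-paste observation: in the disjoint union $S$ each crossing arc appears twice (once in red, once in blue); replacing the two copies by two parallel arcs with separated endpoints on the understrand lets one reassemble $S$ into the projection sphere punctured once at each crossing, which visibly has $\chi=2-\Cr(K)$. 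Your disks-and-bands bookkeeping is more routine and self-contained, at the cost of having to keep the face and band counts straight (which you do, since at every crossing exactly one band belongs to each color); the paper's rearrangement trades that bookkeeping for one geometric identification. One editorial remark: the hedged first version of the CW count and the digression through the (correctly abandoned) Mayer--Vietoris/spine computation add nothing and should be cut --- your closing formulation, ``each surface is $F_b$ (resp.\ $F_r$) disks joined by $\Cr(K)$ bands, each band lowering $\chi$ by one,'' is already a complete proof of the Euler characteristic claim.
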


This result is not hard, and appears elsewhere (see, for example \cite{adams-students:cusp-area}).  We offer a particularly simple proof.  

\begin{proof}
In the disjoint union of red and blue surfaces, each crossing arc appears twice: once in the red, and once in the blue.  In $S^3 \setminus K$, replace each of the crossing arcs with two homotopic arcs, each with an endpoint on the overstrand of the crossing, but with separated endpoints on the understrand of the crossing.  Attach red and blue regions of the diagram to these arcs.  The result is just the plane of projection of the diagram, punctured once by each overcrossing, hence has Euler characteristic $2-\Cr(K)$.  But by construction, this Euler characteristic is exactly the Euler characteristic of the disjoint union of the two checkerboard surfaces.  The area formula follows immediately from the Gauss--Bonnet theorem.
\end{proof}

\begin{lemma}\label{lemma:union-cusparea}
Let $S$ denote the disjoint union of two essential properly immersed surfaces $S_1$ and $S_2$ in $S^3\setminus K$. Give $S$ a hyperbolic metric, by pleating $S$ and pulling back the path metric on $S^3\setminus K$. Let $H$ denote the maximal cusp of $S^3\setminus K$.  Suppose that, for $i =1$ and $2$, the boundary of $S_i$ is a single curve on $\partial N(K)$ with integral slope $\sigma_i$.  
Then $S$ has an embedded cusp $H_S$ with cusp area at least $|\sigma_1-\sigma_2|$, such that $H_S$ maps into $H$ under the map from $S$ to $S^3\setminus K$.  Moreover, if $K$ is neither the figure-eight knot nor the knot $5_2$, the area of $H_S$ is at least $\root 4 \of 2 \, |\sigma_1-\sigma_2|$.
\end{lemma}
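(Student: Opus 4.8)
The plan is to understand exactly how the cusp $H$ of $S^3 \setminus K$ pulls back to the pleated surface $S = S_1 \sqcup S_2$. Each $S_i$ is properly immersed with a single boundary curve on $\partial N(K)$ of integral slope $\sigma_i$, so near the cusp of $S^3 \setminus K$ the surface $S_i$ is a single annulus wrapping around the cusp torus along the $(\sigma_i, 1)$-curve (one longitude, $\sigma_i$ meridians). When we intersect $S_i$ with the horoball neighbourhood $H$ of the cusp, we get an embedded half-open annulus in $S_i$; pulling back the Euclidean metric on $\partial H$, its boundary is a geodesic in the flat cylinder whose core has some length $\ell_i$ and whose "meridian direction" has length $1$ (the meridian has length $1$ with respect to the maximal cusp, by a standard fact, though here I only need a lower bound coming from the slope count). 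First I would set up coordinates on $\partial H = \RR^2 / \Lambda$ so that the meridian $\mu$ and longitude $\lambda$ of $K$ are lattice vectors, and record that the translate of $\partial S_i$ on $\partial H$ is homologous to $\lambda + \sigma_i \mu$.

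The key step is to build $H_S$. On $S_1$ the cusp annulus has core curve tracking $\lambda + \sigma_1\mu$; on $S_2$ it tracks $\lambda + \sigma_2\mu$. Cut each annulus along its core; this produces, inside $S^3 \setminus K$, a piece of the boundary torus $\partial H$ covered (with multiplicity, via the immersion) by the two core curves. The two cores differ homologically by $(\sigma_1 - \sigma_2)\mu$, so together they bound — or more precisely cobound, up to the longitude cancelling — a subsurface of $\partial H$ of area at least $|\sigma_1 - \sigma_2|$ times the length of $\mu$. Taking $H_S$ to be the sub-horoball-neighbourhood of the cusp of $S$ whose boundary consists of these two core geodesics together with the annular region of $\partial H$ between them (pulled back to $S$), one gets an embedded cusp in $S$ mapping into $H$, and $\area(\partial H_S) \geq |\sigma_1 - \sigma_2| \cdot \length(\mu)_{\partial H}$. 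Since the meridian has length at least $1$ on the maximal cusp (Thurston), this gives the bound $\area(H_S) \geq |\sigma_1 - \sigma_2|$. The hardest part of this step is checking embeddedness: the two annuli in $S_1$ and $S_2$ are disjoint (disjoint union of surfaces!), and their images in $S^3 \setminus K$ both lie in the embedded neighbourhood $H$, but one must verify that the chosen $H_S$ — a genuine subset of the abstract surface $S$ — is an embedded horoball neighbourhood of its cusps, i.e. that horoball sections at the chosen height do not overlap themselves within each $S_i$; this follows because $\partial H$ is embedded in $S^3 \setminus K$ and each $S_i$ meets $H$ in an embedded annulus.

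For the improved constant $\root 4 \of 2\,|\sigma_1 - \sigma_2|$, the plan is to invoke the sharper lower bound on meridian length: Adams and collaborators (or the Euclidean-geometry argument behind \cite{adams-students:cusp-area}) show that for a hyperbolic knot in $S^3$ other than the figure-eight and $5_2$, the meridian has length at least $\root 4 \of 2$ on the maximal cusp — these two knots being the only exceptions. Substituting $\length(\mu)_{\partial H} \geq \root 4 \of 2$ into the inequality $\area(\partial H_S) \geq |\sigma_1 - \sigma_2|\cdot \length(\mu)_{\partial H}$ immediately yields the second assertion. I expect the main obstacle to be the careful bookkeeping in the embeddedness argument for $H_S$ and making precise the claim that the annular region of $\partial H$ "between" the two core curves has area exactly $|\sigma_1 - \sigma_2| \cdot \length(\mu)$: one should choose the core curves to be the geodesic representatives of $\lambda + \sigma_i \mu$ on the flat torus $\partial H$ and use that parallel geodesics of that slope sweep out the torus, so the strip between the two consecutive lifts has the stated area. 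The rest is routine hyperbolic and Euclidean geometry.
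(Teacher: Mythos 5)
Your target inequality is the same as the paper's, but the route you take to it has a genuine gap at its central step. The lemma is about the \emph{intrinsic} hyperbolic area of a cusp neighbourhood $H_S$ of the abstract pleated surface $S$, and you try to read this area off from a region of the Euclidean torus $\partial H$. To do so you assert that each $S_i$ meets $H$ in a single embedded half-open annulus whose boundary, in the metric pulled back from $\partial H$, is a flat geodesic of slope $\lambda+\sigma_i\mu$. None of this is justified, and for the surfaces to which the lemma is actually applied (the immersed twisted checkerboard surfaces of Section 3) it is false in general: the preimage $f^{-1}(H)$ under a pleated immersion can have many components, the component containing the cusp of $S_i$ need not be an annulus and need not map homeomorphically, and $f^{-1}(\partial H)$ need not be geodesic in any sense. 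In particular your embeddedness argument for $H_S$ is circular, since it invokes exactly the annulus structure that would need proof. The missing ingredient is precisely the result the paper quotes from Futer and Schleimer \cite[Lemma~2.5]{futer-schleimer}: for each $i$ there is an embedded horocusp $H_i\subset S_i$, mapping into $H$, whose intrinsic area is at least the length of the geodesic representative $\gamma_i$ of $\partial S_i$ on $\partial H$. Without this (or a proof of a substitute), your key inequality $\area(H_S)\geq |\sigma_1-\sigma_2|\cdot \length(\mu)$ is not established; note that the area of a horocusp in a hyperbolic surface equals the length of its bounding horocycle, and relating that quantity to Euclidean areas on the 3-manifold's cusp torus is the whole difficulty.

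There is also a more local error in the quantitative step: the curves of slopes $\lambda+\sigma_1\mu$ and $\lambda+\sigma_2\mu$ are not parallel on $\partial H$ (they meet in $|\sigma_1-\sigma_2|$ points), so there is no ``strip between two consecutive lifts'' of area $|\sigma_1-\sigma_2|\,\length(\mu)$; your closing remark about parallel geodesics sweeping out the torus applies only to two curves of the same slope. Once the Futer--Schleimer input is in place, the correct finish is to bound $\length(\gamma_1)+\length(\gamma_2)$ from below: the paper orients the two curves so that their intersection numbers with the meridian are opposite and resolves their intersections into $|\sigma_1-\sigma_2|$ meridians of the same total length; equivalently one may use the triangle inequality for the corresponding lattice vectors, $\Vert \lambda+\sigma_1\mu\Vert + \Vert \lambda+\sigma_2\mu \Vert \geq |\sigma_1-\sigma_2|\,\Vert\mu\Vert$, which makes your homological remark rigorous. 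Then Thurston's bound $\length(\mu)\geq 1$ and Adams' bound $\length(\mu)\geq \root 4 \of 2$ for hyperbolic knots other than the figure-eight and $5_2$ (the paper cites \cite{adams:waist2} for this, not \cite{adams-students:cusp-area}) give the two stated constants. Your use of the meridian bounds at the end is correct, but it rests on the unproved comparison between the cusp area of $S$ and the geometry of $\partial H$.
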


\begin{proof}
For $i=1, 2$, there exists an embedded cusp $H_i$ for $S_i$ with area at least the length of a geodesic representative $\gamma_i$ for $\partial S_i$ on $\partial H$, and which has image in $S^3\setminus K$ contained in $H$, for example by a lemma of Futer and Schleimer \cite[Lemma~2.5]{futer-schleimer}.

Orient $\gamma_1$ and $\gamma_2$ so that their intersection numbers with the meridian of $K$ are opposite.  If we resolve the intersections between $\gamma_1$ and $\gamma_2$ so that they respect these orientations, the result is a collection of meridians with the same length as $\gamma_1 \cup \gamma_2$.  The number of these meridians is $|\sigma_1-\sigma_2|$.  Each has length at least $1$, by standard arguments in hyperbolic geometry.  So, the length of $\gamma_1 \cup \gamma_2$ is at least $|\sigma_1-\sigma_2|$, and this forms a lower bound for the area of $H_S = H_1 \cup H_2$.

In fact, Adams \cite{adams:waist2} proved that the meridian of any hyperbolic knot in the 3-sphere has length at least $\root 4 \of 2$, with the exception of the figure-eight and $5_2$ knots. Hence, using this bound, the length of $\gamma_1 \cup \gamma_2$ is at least $\root 4 \of 2 \, |\sigma_1-\sigma_2|$, and again this forms a lower bound for the area of $H_S = H_1 \cup H_2$.
\end{proof}

\begin{lemma}\label{lemma:checkerboard-cusparea}
Let $S$ denote the disjoint union of the two checkerboard surfaces for the hyperbolic alternating knot $K$.  Then $S$ has an embedded cusp $H_S$ such that the area of $H_S$ is at least twice the number of crossings in the prime, twist reduced diagram of $K$, and such that $H_S$ maps into the maximal cusp for $S^3 \setminus K$.  That is, 
$$\area(H_S) \geq 2 \, \Cr(K).$$
Moreover, if $K$ is neither the figure-eight knot nor the knot $5_2$, the area of $H_S$ is at least $2^{5/4} \, \Cr(K)$.
\end{lemma}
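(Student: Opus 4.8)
The plan is to obtain this as a direct application of Lemma~\ref{lemma:union-cusparea} to the pair $S_1,S_2$ consisting of the red and blue checkerboard surfaces. First I would check the hypotheses of that lemma. Since $K$ is hyperbolic and alternating, $S^3\setminus K$ is hyperbolic; by Lemma~\ref{lemma:checkerboard-essential} both $S_1$ and $S_2$ are essential; they are properly embedded, hence properly immersed; and, because $K$ has a single component, each $S_i$ is a spanning surface, so $\partial S_i$ is a single curve on $\partial N(K)$ of some integer slope $\sigma_i$ (the surface framing of $S_i$). Lemma~\ref{lemma:union-cusparea} then produces an embedded cusp $H_S$ in $S=S_1\sqcup S_2$, mapping into the maximal cusp of $S^3\setminus K$, with $\area(H_S)\geq|\sigma_1-\sigma_2|$, and with $\area(H_S)\geq\sqrt[4]{2}\,|\sigma_1-\sigma_2|$ provided $K$ is neither the figure-eight knot nor $5_2$.

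Hence the only substantive point is the identity $|\sigma_1-\sigma_2|=2\,\Cr(K)$: substituting it into the two bounds above gives exactly $\area(H_S)\geq 2\,\Cr(K)$ in general and $\area(H_S)\geq 2^{5/4}\,\Cr(K)$ in the remaining case. To prove the identity I would fix an orientation of $K$ and write $c_+(D),c_-(D)$ for the numbers of positive and negative crossings, so that $c_+(D)+c_-(D)=\Cr(K)$. Because the diagram is alternating and reduced, $S_1$ and $S_2$ coincide with the all-$A$ and all-$B$ Kauffman state surfaces of $D$, and at each crossing the two checkerboard bands running along the crossing arc are half-twisted in opposite senses, in a way dictated consistently by the checkerboard colouring. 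Carrying out the resulting framing count gives $\sigma_1=2c_+(D)$ and $\sigma_2=-2c_-(D)$ up to a common sign; the crucial feature is that the two framings have \emph{opposite} overall signs, so that $|\sigma_1-\sigma_2|=2c_+(D)+2c_-(D)=2\,\Cr(K)$.

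The step I expect to require real care is this framing computation: one must keep track of the (purely conventional) signs of the two slopes and, above all, check that every crossing contributes to $\sigma_1-\sigma_2$ with the same sign --- which is exactly where the alternating hypothesis is used, and which fails for general diagrams. Everything else is a formal consequence of Lemma~\ref{lemma:union-cusparea}. One could also sidestep the hands-on count by quoting the known formula for the boundary slopes of the state surfaces of an adequate diagram. As a sanity check: for the figure-eight knot $c_+=c_-=2$ yields $|\sigma_1-\sigma_2|=8=2\,\Cr(K)$, matching its checkerboard slopes $\pm 4$; for the right-handed trefoil $c_+=3$, $c_-=0$ yields checkerboard slopes $6$ and $0$, of difference $6=2\,\Cr(K)$.
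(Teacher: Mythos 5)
Your proposal is correct and follows the paper's overall strategy exactly: verify the hypotheses of Lemma~\ref{lemma:union-cusparea} for the red and blue checkerboard surfaces (essentiality via Lemma~\ref{lemma:checkerboard-essential}, single integral boundary slope because each is a spanning surface of a knot), and then establish $|\sigma_1-\sigma_2|=2\,\Cr(K)$. The only place you diverge is in how that identity is proved. The paper does it without any orientations or signs: the two surfaces meet precisely in the crossing arcs, and near each crossing the curves $\partial R$ and $\partial B$ on $\partial N(K)$ cross once at the overstrand and once at the understrand, so the geometric intersection number, which for two integral slopes equals $|\sigma_1-\sigma_2|$, is exactly $2\,\Cr(K)$. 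You instead compute the two framings separately, $\sigma_1=2c_+(D)$ and $\sigma_2=-2c_-(D)$ up to a common sign, which is the known boundary-slope formula for the all-$A$ and all-$B$ state surfaces of an adequate (in particular reduced alternating) diagram, and you correctly flag that the consistency of signs is where alternation enters. Both derivations are valid; the paper's unoriented intersection count is a bit more elementary and self-contained (one local picture per crossing, no writhe bookkeeping), while your signed framing count, if you quote the state-surface slope formula as you suggest rather than redoing the local computation, is an equally legitimate shortcut and has the minor virtue of identifying the individual slopes, not just their difference. As written, the framing count itself is only sketched ("carrying out the resulting framing count gives..."), so to be complete you should either include the local half-twist computation at a crossing or cite the adequate-surface slope formula explicitly; with that, the argument is complete.
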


\begin{proof}
Color the two checkerboard surfaces red and blue, and denote them $R$ and $B$.  Denote their disjoint union by $S$.  Note that $R$ and $B$ are properly embedded in the exterior of $K$, each with boundary a single curve with integral slope. 

We claim that the difference in slopes  of $\partial R$ and $\partial B$ is twice the crossing number.  This is because $R$ and $B$ intersect exactly in crossing arcs, with boundary slopes intersecting once at an overcrossing, once at an undercrossing.  See Figure \ref{fig:crossingandsurface}.

\begin{figure}
  \includegraphics{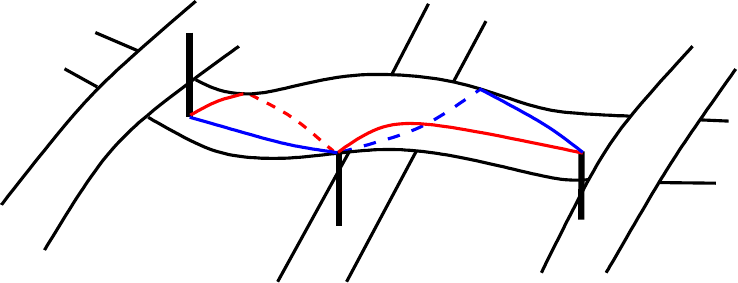}
  \caption{The red and blue surfaces run over two meridians per
    over--crossing.}
  \label{fig:crossingandsurface}
\end{figure}

Thus Lemma~\ref{lemma:union-cusparea} gives the required cusp for $S$, with area at
least $2\,\Cr(K)$. Moreover, when $K$ is neither the figure-eight knot nor the knot $5_2$, the area of the cusp is at least $2^{5/4} \,\Cr(K)$.
\end{proof}

The above lemma is simple, but quite striking. It is easy to construct hyperbolic surfaces with large area but with small cusp area, but Lemma \ref{lemma:checkerboard-cusparea} asserts that this type of geometry never occurs among checkerboard surfaces of alternating knots, when in pleated form.

This lower bound on the cusp area of $S$ is central to our argument. We will use it in Subsection~\ref{subsec:shortarcs} to establish the existence a large collection of disjoint properly embedded essential arcs in $S$, each of which has bounded length. By Lemma~\ref{lemma:checkerboard-essential}, these can be homotoped to geodesics in $S^3 \setminus K$ without increasing their length. We will see in Subsection~\ref{subsec:shortarcs} that each such arc gives a definite contribution to the cusp volume of $S^3 \setminus K$. However, it may be the case that distinct geodesics in $S$ are homotoped to the same geodesic in $S^3 \setminus K$.  In the remainder of this subsection, we investigate this phenomenon more thoroughly.

Consider a twist region of the diagram. A small regular neighbourhood of this twist region in the diagram is a disc, and a regular neighbourhood of this disc is a 3--ball in $S^3$.  The intersection between this ball and a checkerboard surface is the subsurface that is \emph{associated} with this twist region. Note that if the twist region has $c$ crossings, then one of these subsurfaces has Euler characteristic $2-c$ and the other is a disc.  See Figure \ref{fig:twistsubsurface}.

\begin{figure}
  \includegraphics[width=2in]{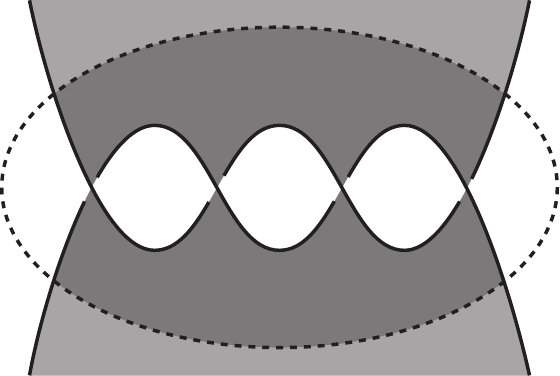}
  \caption{The subsurface of a checkerboard surface associated with a twist region.}
  \label{fig:twistsubsurface}
\end{figure}

The following is Proposition~7.12 of~\cite{lp:twcheck}.

\begin{lemma}\label{lemma:checkerboard-htpcarcs}
Let $S$ denote the disjoint union of the two checkerboard surfaces of a prime, twist reduced, alternating diagram of a hyperbolic knot $K$.  Suppose $a_1$ and $a_2$ are disjoint essential embedded arcs in $S$ that are not homotopic in $S$, but are homotopic in $S^3\setminus K$ after including $S$ into $S^3\setminus K$.  Then either $a_1$ and $a_2$ are isotopic in $S$ to crossing arcs in the same twist region of the diagram, or they both lie on the same checkerboard surface and both are isotopic in that checkerboard surface to arcs in the same subsurface associated with some twist region.
\end{lemma}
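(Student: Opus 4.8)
First I would convert the homotopy hypothesis into a geometric object. Since $a_1$ and $a_2$ are disjoint, embedded, essential arcs of $S$ that are homotopic in $M := S^3 \setminus K$ but not in $S$, a homotopy between them (with endpoints allowed to slide on $\partial N(K)$) gives a map of a disk $E \to M$ whose boundary runs, in cyclic order, along $a_1$, an arc $\gamma_1 \subset \partial N(K)$, $a_2$, and an arc $\gamma_2 \subset \partial N(K)$ (a possibly degenerate ``square''). Using that $M$ is irreducible and that the checkerboard surfaces are incompressible and boundary-incompressible (Lemma~\ref{lemma:checkerboard-essential}), I would put $E$ in a convenient position and, by standard $3$--manifold topology, take it embedded; the hypothesis that $a_1$ and $a_2$ are \emph{not} homotopic in $S$ is exactly what makes $E$ essential, in the sense that its two sides $a_1,a_2$ cannot be isotoped together across $E$ inside $S$. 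I would then split into two cases: either $a_1$ and $a_2$ lie on the same checkerboard surface, so $E$ is an essential product disk for that surface, or they lie on different ones, so $E$ is a ``mixed'' square with one side on $R$ and one on $B$.

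The crux is to classify such squares. I would put $K$ in Menasco's standard form with respect to the projection sphere $\Sigma$ and the crossing balls (equivalently, cut $M$ along $R\cup B$ to obtain the standard ideal polyhedral decomposition associated with the alternating diagram, whose faces are the regions of $D$ and whose ideal vertices come from the crossings), and isotope $E$ so that $E\cap(R\cup B)$ is a family of arcs, discarding closed curves and trivial arcs of intersection using incompressibility and boundary-incompressibility of $R\cup B$ together with irreducibility of $M$. Then $E$ is cut into normal squares inside the polyhedra. Running the usual innermost-disk and bigon-move arguments of Menasco — with primeness of $D$ used to forbid $E$ being pinched off around a two-edge curve of the diagram graph — I would argue that $E$ must lie in a ball neighbourhood $N(T)$ of a single twist region $T$ of $D$, so that, up to isotopy in $S$, its sides $a_1,a_2$ are arcs in the subsurfaces of $R\cup B$ associated with $T$. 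In the same-surface case this is already the second alternative of the lemma. In the mixed case, $R$ and $B$ meet $N(T)$ in a ``ladder'' subsurface and a disc subsurface whose intersection is precisely the crossing arcs of $T$; a homotopy inside $N(T)$ between an arc of $R$ and an arc of $B$ can be pushed into $R\cup B$ and therefore factors through these crossing arcs, forcing $a_1$ and $a_2$ to be isotopic in $S$ to crossing arcs of $T$, which is the first alternative. Throughout, twist-reducedness of $D$ is used to guarantee that the region $T$ one produces is an honest twist region of the given diagram, and not a pair of twist-equivalent regions that $D$ fails to amalgamate.

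I expect the normal-form analysis of the previous paragraph to be the main obstacle: one must show that an essential square cannot ``travel'' between two distinct twist regions, nor wind around a non-trivial portion of the diagram. This is a finite but delicate combinatorial case analysis of how normal squares in the two checkerboard polyhedra may be glued along faces, and it is the step that genuinely uses that $D$ is prime and twist reduced. Concretely, I would track, for each of $a_1$ and $a_2$, the cyclic sequence of regions and crossings it meets after being made transverse to the crossing arcs of $S$, and use the square $E$ to force these two sequences to run parallel and remain within one twist region; checking that every other configuration is eliminated by the innermost-disk reductions is where the work lies.
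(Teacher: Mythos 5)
The paper does not reprove this lemma: it is quoted as Proposition~7.12 of \cite{lp:twcheck}, and the sketch given there keeps the homotopy between $a_1$ and $a_2$ as an \emph{immersed} rectangle mapped into $S^3\setminus {\rm int}(N(K))$, pulls the checkerboard surfaces back to the domain rectangle, and analyses the curves in the alternating diagram traced out by the boundaries of the complementary regions of that rectangle, in the style of \cite{lackenby:word} and \cite{lackenby:volume-alt}. Your plan diverges at the very first step, and that is where the genuine gap lies: you assert that, using irreducibility of the complement and the incompressibility and boundary-incompressibility of the checkerboard surfaces, the square $E$ can ``by standard $3$--manifold topology'' be taken to be embedded. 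There is no standard result that does this. The annulus theorem and characteristic submanifold theory promote essential maps to embeddings only when the boundary of the square or annulus lies in $\partial M$; here two sides of $E$ lie on the properly embedded surfaces $R$ and $B$, and in the mixed case on two \emph{different} surfaces that intersect each other along the crossing arcs, so you cannot cut along $S$ to turn those sides into boundary without destroying the setting. Innermost-disk surgeries let you remove intersections of an immersed disk with $R\cup B$ that are inessential, but they do not remove self-intersections of the disk, and handling an immersed square with corners of this type is exactly the technical content of Section~7 of \cite{lp:twcheck}.

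Granting an embedded square, the remainder of your outline (Menasco normal position with respect to the crossing balls, primeness and twist-reducedness excluding squares that travel between twist regions, and the mixed case forcing crossing arcs of a single twist region) is a reasonable program and close in spirit to the combinatorial analysis actually carried out; but as written the argument does not get off the ground, because the reduction to an embedded disk is precisely the missing idea. Either you must supply an argument producing an embedded product square from the homotopy (which I do not believe is available in this generality), or you must redo the analysis for the immersed rectangle, tracking the preimage of $R\cup B$ in the domain and ruling out trivial configurations of the induced curves in the diagram, which is what the cited proof does.
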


The proof of this result does not require all of the machinery of \cite{lp:twcheck}, but mostly just the techniques developed in Section 7 of that paper. The proof is rather reminiscent of the arguments in \cite{lackenby:word} and \cite{lackenby:volume-alt}. Since the arcs $a_1$ and $a_2$ are homotopic in $S^3 \setminus K$, there is a map of rectangle into $S^3 \setminus {\rm int}(N(K))$, with the left and right edges mapped to $a_1$ and $a_2$, and the top and bottom mapped to $\partial N(K)$. One then considers the inverse image of the checkerboard surfaces in this rectangle, which divides the rectangle into regions. Each region is mapped into the complement of the checkerboard surfaces, and so one can view its boundary as specifying a curve in the alternating diagram. Lemma \ref{lemma:checkerboard-htpcarcs} is then proved by analysing these curves and by ruling out various trivial configurations, until the map of the rectangle into $S^3 \setminus {\rm int}(N(K))$ is of a very restricted form. The conclusion of the lemma then follows quickly. For more details, see Section 7 of \cite{lp:twcheck}.

\begin{corollary}\label{corollary:numberhomotopicarcs}
Let $S$ denote the disjoint union of the two checkerboard surfaces for a prime, twist reduced, alternating diagram of the hyperbolic knot $K$. Suppose that each twist region contains at most $N$ crossings. Then, for any collection of disjoint embedded essential non-parallel arcs in $S$, all of which are homotopic in $S^3 \setminus K$, the number of such arcs is most $3N-1$.
\end{corollary}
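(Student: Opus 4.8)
The plan is to use Lemma~\ref{lemma:checkerboard-htpcarcs} to confine the whole collection to the two subsurfaces associated with a single twist region, and then to bound the number of arcs there by an Euler characteristic count.

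We may assume the collection has at least two arcs. Since, for essential embedded arcs in a surface, ``not parallel'' coincides with ``not homotopic,'' Lemma~\ref{lemma:checkerboard-htpcarcs} applies to every pair of arcs in the collection and assigns to that pair a twist region via one of its two alternatives. First I would check that all the twist regions so assigned coincide. If a pair satisfies the first alternative, its two arcs are crossing arcs of a common twist region; if it satisfies the second, its arcs lie in a common checkerboard surface and are isotopic into the subsurface of a common twist region. Using that the disk subsurface of a twist region carries at most one arc essential in its checkerboard surface, namely a copy of a crossing arc, together with the fact that all the arcs of the collection are homotopic to one another in $S^3\setminus K$, one sees that every arc occurring in a pair assigned to a twist region $T$ is homotopic in $S^3\setminus K$ to a crossing arc of $T$. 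Because two crossing arcs are homotopic in $S^3\setminus K$ only if they lie in the same twist region (Proposition~7.12 of \cite{lp:twcheck}), this forces all the assigned twist regions to agree; call the common one $T$, with $c\le N$ crossings. Thus every arc of the collection is isotopic in $S$ to a crossing arc of $T$ or into one of the two subsurfaces of $S$ associated with $T$.

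Next, write $S=R\sqcup B$ and let $\Sigma_R\subset R$, $\Sigma_B\subset B$ be the two subsurfaces associated with $T$; say $\Sigma_R$ is the disk and $\chi(\Sigma_B)=2-c$ (the opposite assignment is treated identically after relabelling). The $c$ crossing arcs of $T$ lying in $R$ are all isotopic in $R$, and the disk $\Sigma_R$ carries at most one isotopy class essential in $R$; so the arcs of the collection lying in $R$ form at most one class. If some arc lies in $R$ and some in $B$, then every such pair falls under the first alternative of Lemma~\ref{lemma:checkerboard-htpcarcs}, so all arcs in $B$ are crossing arcs of $T$, of which there are at most $c$ classes, giving at most $1+c$ arcs in all. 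In the remaining case the collection lies entirely in $B$: its arcs are then disjoint, pairwise non-isotopic, essential in $B$, and isotopic into $\Sigma_B$; realizing them simultaneously in $\Sigma_B$ and cutting along them increases $\chi(\Sigma_B)$ by one per arc, and an examination of the complementary pieces — using that no two arcs are parallel, that the only arcs essential in $B$ but not in $\Sigma_B$ are the ones running over a region of the diagram adjacent to the twist region, and that the homotopy constraint above excludes arcs that encircle one of the $c-1$ bigons of the twist region — bounds the number of arcs by $3c-1$. Since $1+c\le 3c-1$ and $c\le N$, the collection has at most $3N-1$ arcs.

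I expect the Euler characteristic count to be the main obstacle: one must determine precisely which isotopy classes of arcs in $\Sigma_B$ are simultaneously essential in $B$ and of the crossing-arc homotopy type in $S^3\setminus K$, and then organize the cell decomposition that the arcs cut out on $\Sigma_B$ carefully enough (watching boundary components that carry no arc endpoints, coincident endpoints, and spurious bigon regions) for the count to close with the stated constant. The transitivity step in the first paragraph also needs attention in the cases that mix the two alternatives of Lemma~\ref{lemma:checkerboard-htpcarcs}.
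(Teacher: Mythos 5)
Your overall strategy is the same as the paper's: use Lemma~\ref{lemma:checkerboard-htpcarcs} to confine the collection to the subsurfaces associated with a single twist region, then bound the number of disjoint non-parallel essential arcs there. However, there are two genuine gaps. First, your transitivity step rests on the claim that every arc occurring in a pair assigned to a twist region $T$ is homotopic in $S^3\setminus K$ to a crossing arc of $T$. That is only justified when the pair falls under the first alternative of Lemma~\ref{lemma:checkerboard-htpcarcs}, or under the second alternative with the disc subsurface; in the main case --- two arcs in the same checkerboard surface isotoped into the subsurface of Euler characteristic $2-c$ --- the lemma gives no control on their homotopy type in $S^3\setminus K$, and such arcs need not be of crossing-arc type at all. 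So the appeal to Proposition~7.12 of \cite{lp:twcheck} to force the assigned twist regions to coincide does not go through as written. Second, the counting step that actually produces the constant (your bound of $3c-1$ for the arcs in $\Sigma_B$) is not carried out; you flag it yourself as the main obstacle, and your sketch of it moreover invokes the crossing-arc homotopy constraint from the flawed first step. Since this count is the heart of the statement, the proof is incomplete.

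For comparison, the paper's route is shorter and avoids any analysis of homotopy types in $S^3\setminus K$ beyond Lemma~\ref{lemma:checkerboard-htpcarcs} itself: one may assume some checkerboard surface $B$ contains more than one arc of the collection; the lemma then isotopes all arcs of the collection lying in $B$ into the subsurface of $B$ associated with one twist region, where the standard bound on disjoint, embedded, pairwise non-parallel essential arcs in such a subsurface gives at most $3N-2$; any arc of the collection lying in the other surface $R$ is then forced (via the first alternative of the lemma) into the associated subsurface of $R$, which supports at most one such arc up to isotopy, giving $3N-1$ in total. If you want to keep your case analysis, you would need either to prove the crossing-arc claim in the second-alternative case or, better, to bypass the pairwise bookkeeping as the paper does, and you would need to supply the arc count in the subsurface of Euler characteristic $2-c$ (an Euler characteristic argument does work there, but it must be written down).
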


\begin{proof}
We may assume that, in this collection, there is more than one arc lying in the same checkerboard surface, $B$, say. By Lemma~\ref{lemma:checkerboard-htpcarcs}, all the arcs lying in $B$ can be isotoped to lie in the subsurface associated with some twist region. We may arrange that, after this isotopy, the arcs remain disjoint, embedded and non-parallel. The number of disjoint essential arcs, no two of which are parallel, that can lie in such a subsurface is at most $3N-2$. Also, by Lemma~\ref{lemma:checkerboard-htpcarcs}, any arc in the collection in $R$ can isotoped to lie in the subsurface associated to this twist region. This can support at most one such arc, up to isotopy.
\end{proof}

\subsection{Short arcs and cusp volume}\label{subsec:shortarcs}

The following result will be used to show that there are many geodesic arcs of bounded length in a hyperbolic alternating knot complement.  

Recall that, in Definition \ref{def:length-arc}, we defined the length of an arc in $S^3$, with both endpoints on $K$. We present a variation of this definition now. Let $F$ be a finite--volume hyperbolic surface, and let $H$ be an embedded horoball neighbourhood of its cusps. Let $\alpha$ be a bi-infinite geodesic in $F$, with both its ends in $H$. Then the intersection between $\alpha$ and $H$ consists of two half-open intervals containing these ends, plus perhaps some closed bounded intervals. Define the \emph{length} of $\alpha$ with respect to $H$ to be the length of the arc obtained by removing the two half-open intervals. Thus, if $\alpha$ also intersects $H$ in some closed bounded intervals, then these do contribute to the length of $\alpha$.

\begin{theorem}\label{thm:arcs}
Let $F$ be a (possibly disconnected) finite--area hyperbolic surface.  Let $H$ be an embedded horoball neighborhood of the cusps of $F$.  Let $k = \area(H)/\area(F)$ and let $d>0$.  Then there is a collection of at least 
$$\frac{(k e^d - 1)\pi }{(e^d - 1)(\sinh (d) + 2 \pi)} |\chi(F)|$$
embedded disjoint bi-infinite geodesic arcs, each with both ends in $H$, and each having length at most $2d$ with respect to $H$. \end{theorem}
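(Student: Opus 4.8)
The plan is to produce the geodesic arcs by an averaging argument over the cusp horocycle. First I would set up the basic picture on the surface $F$: after removing the horoball neighbourhood $H$, we obtain a compact surface $F \setminus H$ with boundary a union of horocycles, whose total length is $\area(H)$ (since in a cusp, the horocycle at height $t$ has length equal to the area of the horoball region above it). The idea is to count, for each boundary horocycle, the orthogeodesic arcs emanating perpendicularly from it: a unit-speed geodesic leaving $\partial H$ perpendicularly either returns to $H$ within length $2d$ (giving a \emph{good} arc, after perhaps trimming the two half-open ends, so total length at most $2d$ with respect to $H$ — note the interior closed intervals in $H$ are what force the bound to be $2d$ rather than $d$), or it travels distance $d$ into $F \setminus H$ without returning. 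I would estimate the measure of starting points of the second, bad, type using an area comparison: a geodesic segment of length $d$ based on the horocycle, together with its normal exponential image, sweeps out a region of $F \setminus H$ whose area I can bound below, à la a collar/tube estimate, in terms of $\sinh(d)$; since the total area available is $\area(F) - \area(H) = (1-k)\area(F)$, only a controlled fraction of the horocycle can consist of bad starting points.

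The second step is the packing argument to pass from ``many good directions'' to ``many disjoint embedded arcs''. Two good orthogeodesic arcs that start too close together on $\partial H$ will be homotopic, hence (after a standard surgery/innermost-disc argument on a surface) isotopic, and contribute only one arc to the final disjoint collection. So I would partition $\partial H$ into subarcs, each of length roughly $\sinh(d)+2\pi$ — the $2\pi$ coming from the fact that an arc returning to the same horocycle can wind around the cusp, and $\sinh(d)$ controlling how far apart the feet of non-homotopic good arcs returning elsewhere must be — and argue that each subarc containing a good point yields at least one arc in the final collection, with distinct subarcs yielding non-parallel arcs. Combining with the first step, the number of good subarcs is at least
\[
\frac{\area(H) - (\text{bad length})}{\sinh(d) + 2\pi}
\geq \frac{\area(H)\left(1 - \frac{e^d-1}{k e^d - 1}\cdot\text{something}\right)}{\sinh(d)+2\pi},
\]
and then I would substitute $\area(H) = k\,\area(F)$ and $\area(F) = 2\pi|\chi(F)|$ (Gauss–Bonnet), bookkeeping the constants until the stated coefficient $(ke^d-1)\pi / \big((e^d-1)(\sinh d + 2\pi)\big)$ drops out. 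The appearance of $e^d$ and $e^d - 1$ strongly suggests the bad-set estimate is done by integrating the horocyclic length along the flow: a horocycle of length $\ell$ flowed inward for time $d$ has length $\ell e^{-d}$ (in the expanding direction it is $e^{d}$), and the swept area is $\int_0^d \ell e^{-t}\,dt = \ell(1 - e^{-d})$, so matching this against $(1-k)\area(F)$ is what converts the naive fraction into the $e^d$-weighted one.

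I expect the main obstacle to be the second step — the precise packing/spacing constant. It is delicate to show that good orthogeodesic arcs whose feet are more than $\sinh(d) + 2\pi$ apart are genuinely non-parallel as arcs in $F$ (and embedded, after the usual homotopy-to-embedded move for arcs on surfaces), and to handle the closed bounded intervals of $\alpha \cap H$ correctly so that the ``length at most $2d$'' bound survives. One has to be careful that trimming the two infinite ends, but keeping interior excursions into $H$, still leaves a geodesic arc whose two ends do lie in $H$, and that the disjointness of the collection can be arranged simultaneously for all the arcs (a standard innermost-intersection argument, but it needs to respect the length bound). The area estimate in the first step is more routine — it is essentially the thick-part volume of a half-collar of a horocyclic segment — but getting the sharp $\sinh(d)$ rather than a cruder bound requires using that $F$ is hyperbolic and writing the metric in Fermi coordinates along the geodesic segment.
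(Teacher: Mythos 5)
Your outline reverse-engineers the right quantities (the good/bad dichotomy at depth $d$, the comparison against $\area(F)-\area(H)$ that produces the factor $(ke^d-1)/(e^d-1)$, and the spacing constant $\sinh d + 2\pi$), but both of its essential steps have genuine gaps, and they are exactly where the paper's proof does its work. First, the bad-set estimate: comparing the area swept by the outward normal flow off $\partial H$ (equivalently, the expansion of $H$ by depth $d$ over part of $\partial H$) with $(1-k)\area(F)$ requires that swept region to be embedded. Avoiding $H$ is built into your definition of ``bad'', but injectivity is not: two bad normal segments can cross each other in $F\setminus H$, and the expanded collar below one stretch of horocycle can collide with the collar below another, so the image can have area far smaller than $(\text{bad length})(e^d-1)$. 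Nothing in your setup rules this out --- a crossing of two bad segments does produce a returning path of length at most $2d$, but that does not make either foot ``good'', so your dichotomy yields no contradiction. The paper resolves precisely this point by an extremal argument: it first fixes a \emph{maximal} collection $G$ of disjoint embedded geodesic arcs of length at most $2d$, expands $H$ only over the complement of a shadow set $I$ of length at most $(2\sinh d + 4\pi)|G|$, and proves (Claim 3) that this expansion is embedded and misses $G$, because any collision would straighten to a new embedded arc of length at most $2d$ disjoint from $G$, contradicting maximality. Without a maximal collection in hand, you have no such mechanism.

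Second, the packing step is asserted rather than proved, and as stated it is not correct: good feet that are far apart on $\partial H$ can straighten to parallel (indeed identical) geodesic arcs, and even when the straightened arcs are distinct there is no reason they are disjoint or embedded. Bounding how much of $\partial H$ a single short arc can ``shadow'' is exactly the content of the paper's Claims 1 and 2, where the $2\pi$ comes from comparing a triangle of area less than $\pi$ with the area $\pi$ lying over an interval of length $\pi$ in $\partial H_\infty$, and the $2\sinh d$ comes from projecting the interior excursions of the arc through $H$; disjointness and embeddedness are obtained for free by taking $G$ maximal at the outset instead of trying to assemble the collection from good feet. So once these two gaps are filled, your plan essentially becomes the paper's proof run in the opposite logical direction; as written, the area comparison and the ``far apart implies non-parallel'' claim are missing the arguments that would make them true.
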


Before we give the proof of this theorem, we make a couple of observations about it. On the one hand, the lower bound on the number of geodesic arcs that it provides, with length at most $2d$, can be viewed as somewhat inefficient. This is because, as $d$ tends to infinity, the lower bound decreases to zero, whereas clearly the number of geodesics with length at most $2d$ is an increasing function of $d$. However, the real utility of the theorem is its universal nature. The maximal number of disjoint
embedded geodesic arcs in a cusped hyperbolic surface $F$ is $3|\chi(F)|$. Theorem \ref{thm:arcs} asserts that one may find a collection of such arcs, with cardinality which is a definite fraction
of this maximum, and where all the arcs have bounded length. The fraction depends little on the topology and geometry of $F$: just the cusp density of $F$ and the length of the arcs determine this fraction. It is the general nature of this estimate that is crucial for our proof of Theorem \ref{thm:main}.

\begin{proof}
Pick a maximal collection of embedded disjoint bi-infinite geodesic arcs $G$, each with both ends in $H$, and each having length at most $2d$ with respect to $H$. Use the upper half space model of $\HH^2$.  Let $\widetilde{H}$ be the inverse image of $H$ in $\HH^2$.  We may arrange that one component $H_\infty$ of $\widetilde{H}$ is $\{ y \geq 1\}$.  For each component $g$ of $G$, pick lifts $\widetilde g$ and $\widetilde g'$ to $\HH^2$, each of which starts on $H_\infty$, but which do not differ by a covering transformation that preserves $H_\infty$.  These run to other components $H_{\widetilde g}$ and $H_{\widetilde g'}$ of $\widetilde{H}$. Let $I(\widetilde g)$ be an open interval on $\partial H_\infty$ centered at the point where $\widetilde g$ exits, with length $2\pi$.  Define $I(\widetilde g')$ similarly.  Each of these projects to an interval in $\partial H$ with length also $2\pi$ (or possibly to all of $\partial H$). 

For each component $g$ of $G$, its intersection with $H$ is two half-open intervals, plus possibly some closed intervals (which may be points). For each closed interval, pick a lift of this to $\HH^2$ that lies in $H_\infty$. This lift lies in a lift $\widehat{g}$ of $g$ in $\HH^2$. Consider the vertical projection of $\widehat{g} \cap H_\infty$ onto $\partial H_\infty$. Its image in $\partial H$ is termed the \emph{vertical projection of the interval onto $\partial H$}. 

We define $I$ to be the union of the following subsets of $\partial H$:
\begin{enumerate}
\item for each geodesic $g$ in $G$, the images of $I(\widetilde g)$ and $I(\widetilde g')$ in $\partial H$;
\item for each geodesic $g$ in $G$ and for each closed interval of $g \cap H$, the vertical projection of this interval onto $\partial H$.
\end{enumerate}

\medskip
\noindent {\bf Claim 1.} The length of $I$ is at most $|G|(2 \sinh (d) + 4 \pi)$. 
\medskip

Clearly, for each geodesic $g$ in $G$, the intervals $I(\widetilde g)$ and $I(\widetilde g')$ contribute at most $4 \pi$ to the length of $I$. So, we consider the projection of the closed intervals in $g \cap H$ onto $\partial H$. Let $\ell_1, \dots, \ell_n$ be the lengths of these intervals. Then the projections of these intervals onto $\partial H$ have lengths $2 \sinh (\ell_1/2) , \dots, 2 \sinh (\ell_n/2)$.
Since $\sinh$ is increasing and convex on the positive real line, and $\ell_1 + \dots + \ell_n \leq 2d$, we deduce that the total length of these projected intervals is at most $2 \sinh d$. This proves the claim.

\medskip
\noindent {\bf Claim 2.} For each lift $\widehat{g}$ of a component of $G$, if $\widehat{g}$ intersects $\partial H_\infty$, then the vertical projection of $\widehat{g}$ onto $\partial H_\infty$ lies entirely in the inverse image of $I$.
\medskip

We may assume that $\widehat{g}$ has maximal projection onto $\partial H_\infty$, in the sense that this projection is not contained within the projection of some other lift of $G$.  Let $x$ be one of the two points of $\widehat{g} \cap \partial H_\infty$. Let $\overline{\alpha}$ be the geodesic ray starting at $x$ running vertically into $H_\infty$. Then $\overline{\alpha} - x$ is disjoint from the inverse image of $G$, by our assumption about the maximality of the projection of $\widehat{g}$. Let $\overline{\beta}$ denote the subset of $\widehat{g}$ that is a geodesic ray starting at $x$ and exiting $H_\infty$. Then $\overline{\alpha} \cup \overline{\beta}$ is a piecewise geodesic, and after a small isotopy, it can be made disjoint from the inverse image of $G$. Hence, it maps to an arc in $F$ that is disjoint from $G$. This is embedded, because $\overline{\alpha}$ and $\overline{\beta}$ both have images that are embedded in $F$, and these images have disjoint interiors. Straighten $\overline{\alpha} \cup \overline{\beta}$ to a geodesic $\gamma$ that hits $\partial H_\infty$ orthogonally. Then the image of $\gamma$ is also embedded and is isotopic to an arc disjoint from $G$. Its length outside the cusp is less than $2d$. So, the image of $\gamma$ lies in $G$, by the maximality of $G$. Therefore $I(\gamma)$ maps to $I$.

Now the triangle with sides $\overline{\alpha}$, $\overline{\beta}$ and $\gamma$ has area less than $\pi$. On the other hand, the area contained in the portion of $H_\infty$ lying over $I(\gamma)$ on the side of $\gamma$ containing $x$ is $\pi$. Hence $I(\gamma)$ contains the interval in $\partial H_\infty$ between $\gamma\cap \partial H_\infty$ and $x$.
Hence, this entire interval lies in the inverse image of $I$. Since this argument applies to both points of $\widehat{g} \cap \partial H_\infty$, the claim is proved.

For the parts of $\partial H$ not lying in $I$, expand $H$ by vertical distance $d$, giving a (closed) neighbourhood $H'$ of the cusps.  See Figure~\ref{fig:horoballexpand1}.

\begin{figure}
  \includegraphics[width=5in]{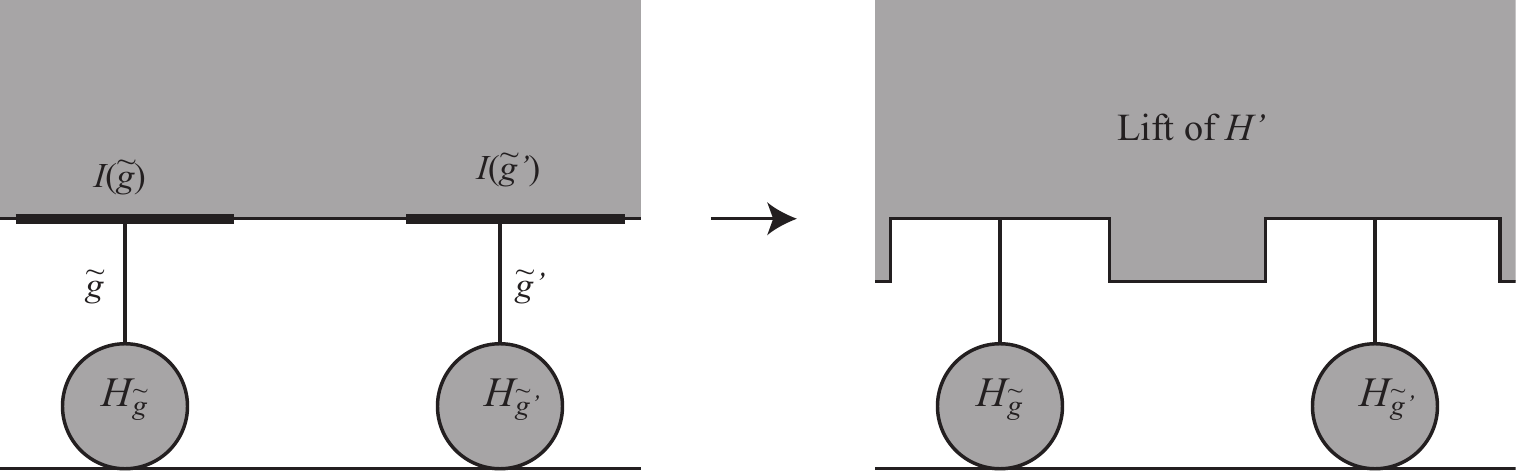}
  \caption{Expanding $H$ to form $H'$}
  \label{fig:horoballexpand1}
\end{figure}

\medskip
\noindent {\bf Claim 3.} $H'$ is disjoint from $G-H$ and is embedded in $F$.
\medskip

Suppose that $H'$ intersects $G-H$ or is not embedded in $F$. We view $H'$ as obtained from $H$ by slowly expanding it
vertically. There is some first moment in time when $H'$ intersects $G$ or when $H'$ becomes non-embedded.

Let us suppose that $H'$ intersects $G-H$ first.  Emanating from this point of intersection, there is a geodesic segment which runs vertically in $H'$ to the boundary of the cusp $H$.  Pick a lift $\alpha$ of this geodesic segment in $\HH^2$ with one endpoint in $\partial H_\infty$. Let $x$ be the other endpoint. Note by choice of $H'$, the geodesic segment $\alpha$ has length at most $d$.  The starting point $x$ of $\alpha$ lies on a lift $\widehat g$ of a component of $G$.  This bi-infinite geodesic $\widehat g$ has ends in horoballs $H_1$ and $H_2$ that are not $H_\infty$.  The length of $\widehat g$ between $H_1$ and $H_2$ is at most $2d$, and so one of the components $\beta$ of $\widehat g - x$ has length at most $d$ outside of $H_1$, say.  Extend $\alpha$ and $\beta$ to infinite geodesic rays $\overline{\alpha}$ and $\overline{\beta}$ running into $H_\infty$ and $H_1$, respectively, and meeting at $x$. Note that  $\overline{\beta}$ misses $H_\infty$, because otherwise, by Claim 2, the entire vertical projection of $\widehat g$ onto $\partial H_\infty$ has image in $I$, whereas, by assumption, $\alpha \cap \partial H_\infty$ does not have image in $I$.
See Figure \ref{fig:expandhoroball2}. Straighten $\overline{\alpha}\cup \overline{\beta}$ to a geodesic $\gamma$ that hits $\partial H_\infty$ and $\partial H_1$ orthogonally. Clearly, the image of $\gamma$ in $F$ has length at most $2d$ with respect to $H$. Also, the image of $\gamma$ is embedded in $F$, because it is a geodesic homotopic to $\overline{\alpha} \cup \overline{\beta}$, which has image embedded in $F$. So the image of $\gamma$ lies in $G$, by the maximality of $G$.  Consider the triangle formed by $\gamma$, $\overline{\alpha}$ and $\overline{\beta}$. This has area less than $\pi$.  Since $\alpha$ avoids $I(\gamma)$, and $\overline{\beta}$ misses $H_\infty$, we deduce that the region lying between $\overline{\alpha}$ and $\gamma$ in $H_\infty$ has area at least $\pi$, which is a contradiction, proving the claim in this case.

\begin{figure}
  \includegraphics[width=2.5in]{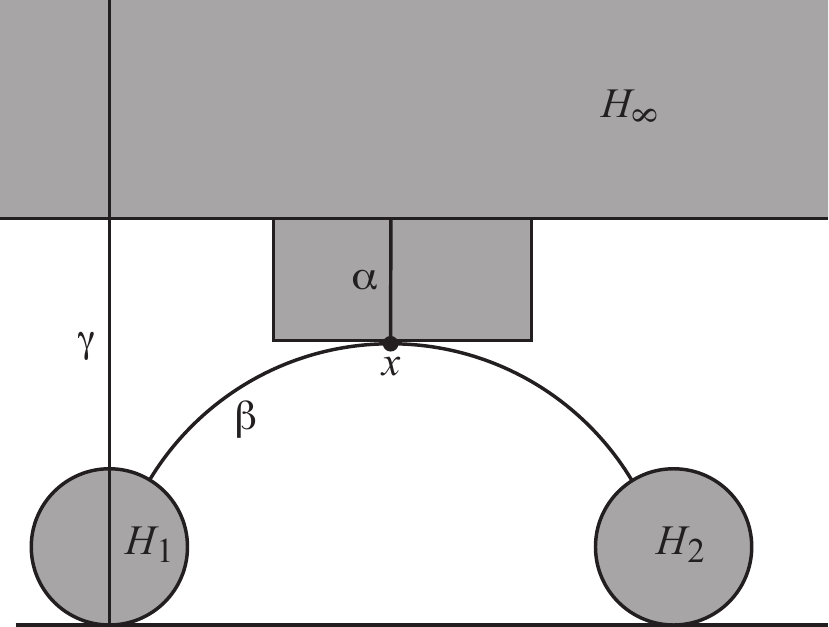}
  \caption{The case where $H'$ intersects $G-H$ first}
  \label{fig:expandhoroball2}
\end{figure}

Suppose now that when $H'$ is formed by expanding $H$, it becomes non-embedded first.  Then at this moment in time, the neighbourhood fails to be embedded at some point.  Emanating from this point, there are two geodesic segments which run to the boundary of the horoball neighborhood $H$, and which lift to geodesic segments $\alpha_1$ and $\alpha_2$ in $\HH^2$ running to the boundary of distinct horoballs in $\HH^2$.  We may take one of these to be $H_\infty$.  The geodesic segments $\alpha_1$ and $\alpha_2$ each have length at most $d$, by choice of $H'$.  So extending to infinite rays $\overline{\alpha}_1$ and $\overline{\alpha}_2$ and concatenating, we obtain a piecewise geodesic that has length at most $2d$ with respect to lifts of $H$.  Its image in $F$ is embedded (because this moment in time was the first time when $H'$ bumped into itself). It also misses $G$.  So, it is isotopic to an embedded geodesic $g$ that has length at most $2d$ with respect to the cusp.  Thus, $g$ lies in $G$ by the maximality of $G$.  So at the two points where $g$ intersects $\partial H$, there are the images of the two intervals $I(\widetilde g)$ and $I(\widetilde g')$.
Now, neither of these intervals is expanded when forming $H'$, and so at least one of $\overline{\alpha}_1$ and $\overline{\alpha}_2$ misses both intervals, say $\overline{\alpha}_1$ does. Consider the triangle formed by $g$, $\overline{\alpha}_1$ and $\overline{\alpha}_2$.  This has area less than $\pi$.  But the region lying between $\overline{\alpha}_1$ and $g$ inside $H$ has area at least $\pi$.  This contradiction proves the claim.

\medskip

The claim implies the theorem, because then
\begin{align*}
\area(H') & \geq 
\area(H) + (e^d - 1) \length(\partial H - I) \\
& \geq  \area(H)+(e^d - 1)\length(\partial H)  - (e^d - 1)|G|(2 \sinh (d) + 4 \pi)
\\
& =  e^d \area(H) - (e^d - 1) |G|(2 \sinh (d) + 4 \pi).
\end{align*}
Thus
\begin{align*}
\area(F) & \geq  \area(H') \\
& \geq  e^d \area(H) - (e^d - 1) |G| (2 \sinh (d) + 4 \pi)\\
& =  k e^d \area(F) - (e^d - 1) |G| (2 \sinh (d)  + 4 \pi).
\end{align*}
So,
\begin{align*}
|G| & \geq \frac{k e^d - 1}{(e^d - 1)} \frac{\area(F)}{(2 \sinh (d)  + 4 \pi)} \\
& = \frac{(k e^d - 1)\pi }{(e^d - 1)(\sinh (d) + 2 \pi)} |\chi(F)|.
\end{align*} \qedhere
\end{proof}


Recall from Definition \ref{def:length-arc}, the definition of the length of an arc in $S^3 \setminus K$, measured with respect to a maximal cusp.

\begin{lemma}\label{lemma:shortarcs-volume}
Suppose that a one--cusped hyperbolic 3--manifold $M$ contains at least $p$ homotopically distinct essential arcs, each with length at most $L$ measured with respect to the maximal cusp $H$ of $M$.  Then the cusp area $\area(\bdy H)$ is at least  $p\,\sqrt{3}\,e^{-2L}$.
\end{lemma}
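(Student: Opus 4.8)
The plan is to pass to the upper half space model of $\HH^3$, extract a disjoint family of horoballs from the given arcs, and then convert a horoball‑packing estimate into the claimed lower bound on $\area(\partial H)$.

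First I would normalise so that one component of the full preimage $\widetilde H$ of $H$ in $\HH^3$ is $H_\infty=\{(z,t)\in\CC\times\RR_{>0} : t\geq 1\}$. Then the parabolic subgroup of $\pi_1(M)$ fixing $\infty$ acts on $\CC$ by translations in a rank‑two lattice $\Lambda$, and $\area(\partial H)$ equals the area of the flat torus $\CC/\Lambda$. Taking $p$ homotopically distinct essential arcs of length at most $L$, I replace each by its geodesic representative $\gamma_i$, a bi‑infinite geodesic running into the (single) cusp at both ends; distinct homotopy classes give distinct geodesics $\gamma_i$. For each of the two ends of each $\gamma_i$ I choose the lift of $\gamma_i$ sending that end to $\infty$; the opposite end then lands at a point $w\in\CC$ and lies in a component of $\widetilde H$, which is a horoball of Euclidean diameter $h$ based at $w$. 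Since $\gamma_i$ meets horospheres orthogonally, this lift is the vertical geodesic through $w$, and a one‑line computation identifies the length of the corresponding arc (in the sense of Definition~\ref{def:length-arc}) with $-\ln h$, so $h\geq e^{-L}$. In this way the $p$ arcs yield $2p$ horoballs $B_1^\pm,\dots,B_p^\pm$ of $\widetilde H$, all disjoint from $H_\infty$, each of diameter at least $e^{-L}$, based at points $w_1^\pm,\dots,w_p^\pm\in\CC$.

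The crucial step is to show that these $2p$ base points are pairwise distinct modulo $\Lambda$. If $w_i^\epsilon\equiv w_j^\delta\pmod\Lambda$ with $i\neq j$, then after a parabolic translation $B_i^\epsilon$ and $B_j^\delta$ are horoballs of the embedded family $\widetilde H$ with a common base point, hence coincide; as such a horoball determines the associated lift of the geodesic (the vertical line through its base point), this forces $\gamma_i=\gamma_j$, a contradiction. The remaining possibility is $w_i^+\equiv w_i^-\pmod\Lambda$ for a single arc, in which case a parabolic translation carries one of the two chosen lifts of $\gamma_i$ onto the other while swapping which end runs to $\infty$; unwinding this produces a nontrivial element of $\pi_1(M)$ that preserves a lift of $\gamma_i$ and interchanges its two endpoints on $\partial\HH^3$. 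Such an isometry is a rotation by $\pi$ about an axis perpendicular to that geodesic, hence elliptic, contradicting torsion‑freeness of $\pi_1(M)$. I expect this last case to be the main obstacle, since it is the one point where it matters that $M$ is a manifold rather than an orbifold.

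Finally I would invoke the elementary fact that two disjoint horoballs based at $u,u'\in\CC$ with Euclidean diameters $d,d'$ satisfy $|u-u'|\geq\sqrt{dd'}$. Applying this to the $B_i^\pm$ together with all their $\Lambda$‑translates, which are mutually disjoint (being components of $\widetilde H$) and all of diameter at least $e^{-L}$, shows that the $\Lambda$‑periodic set $\{w_i^\pm+\lambda : \lambda\in\Lambda\}$ has all pairwise distances at least $e^{-L}$; that is, it is the set of centres of a packing of $\CC$ by round disks of radius $\tfrac12 e^{-L}$. By the optimality of the hexagonal circle packing in the plane, the Voronoi cell of each centre has area at least $2\sqrt3\,(\tfrac12 e^{-L})^{2}=\tfrac{\sqrt3}{2}\,e^{-2L}$, so the fundamental domain $\CC/\Lambda$, which contains $2p$ centres, has area at least $2p\cdot\tfrac{\sqrt3}{2}\,e^{-2L}=p\sqrt3\,e^{-2L}$. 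Since this area is $\area(\partial H)$, the lemma follows.
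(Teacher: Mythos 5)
Your proposal is correct and follows essentially the same route as the paper: normalise so a lift of the maximal cusp is $H_\infty=\{t\geq 1\}$, observe that each geodesic arc of length at most $L$ yields two horoballs of Euclidean diameter at least $e^{-L}$ whose shadows give $2p$ disjoint disks on $\partial H$, and then apply the hexagonal circle-packing density bound (your Voronoi-cell estimate is exactly the $2\sqrt{3}/\pi$ bound the paper cites from B\"or\"oczky). The only difference is that you spell out why the $2p$ base points are pairwise distinct modulo the cusp lattice $\Lambda$ -- in particular ruling out $w_i^+\equiv w_i^-$ via the elliptic/torsion-freeness argument -- a verification the paper leaves implicit.
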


\begin{proof}
Consider the universal cover $\HH^3$ of $M$, arranged so that one component of the lift of the maximal cusp $H$ is the horoball $H_\infty = \{z \geq1\}$.  Let $T$ be a fundamental domain for the cusp torus $\bdy H$ on $\bdy H_\infty$.

Let $G$ be our collection of essential arcs, with $|G| \geq p$.  Each arc $a$ in $G$ has geodesic representative in $M$ which lifts to give \emph{two} vertical geodesics meeting $T$: for oriented $a$, one such lift runs from infinity to a point on $\CC$ below $T$, and the other lift runs from a point on $\CC$ below $T$ to infinity.  One end of each arc is at infinity.  The other runs into a horoball centered at some other point on $\CC$.  Since the maximal cusp is embedded, each of these horoballs is disjoint in $\HH^3$.  Hence each gives a circular ``shadow'' on $\partial H$.  We estimate the size of the shadows and apply a circle packing argument to obtain our area estimate.

Because the (hyperbolic) length of each arc $a$ is at most $L$, the horoball at the end of each lift of $a$ in $T$ has (Euclidean) diameter at least $e^{-L}$.  Each such horoball projects onto $\bdy H_\infty$, giving a disk of diameter $e^{-L}$ on $\partial H$.  Note that while larger horoballs may partially cover smaller ones, if we shrink all horoballs to have diameter exactly $e^{-L}$, then the collection of horoballs will remain embedded in $\HH^3$, and so the projections of these smaller disks onto $\partial H$ are all disjoint.  Hence each arc $a$ in $G$ corresponds to two embedded disks on $\partial H$ of diameter $e^{-L}$.

It follows that the area of the cusp torus $\partial H$ is at least $2p \cdot \pi(e^{-L}/2)^2 = p\,\pi e^{-2L}/2$.  We can improve this estimate using a disk packing argument (see \cite[Theorem 1]{boroczky}): the area of must be at least $2\sqrt{3}/\pi$ times the sum of the areas of the disks, or
\[ \area(\bdy H) \geq \frac{2\sqrt{3}}{\pi}\cdot \frac{p\, \pi e^{-2L}}{2} = p\,\sqrt{3}\, e^{-2L}. \qedhere \]
\end{proof}

\begin{theorem}\label{thm:bounded-crossings}
Suppose $K$ is a hyperbolic alternating knot with a prime, twist reduced diagram, which we also call $K$, with no more than $N$ crossings in each twist region.  Let $H$ be a maximal cusp of the hyperbolic manifold $S^3 \setminus K$.  Then the cusp area satisfies:
\[ \frac{1.844 \times 10^{-4}}{(3\, N - 1)}\,(\tw(K)-2) \leq \area(\bdy H) <  10\sqrt{3}\, (\tw(K)-1). \]
\end{theorem}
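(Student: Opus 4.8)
The plan is to prove the two inequalities separately, with only the lower bound using the hypothesis that each twist region has at most $N$ crossings. The upper bound follows by combining Böröczky's cusp density bound \cite{boroczky} with the volume bound of Lackenby, improved by Agol and D.~Thurston \cite{lackenby:volume-alt}: the cusp volume of $S^3\setminus K$ is at most $(\sqrt 3/(2v_3))\vol(S^3\setminus K)$, the cusp area is twice the cusp volume, and $\vol(S^3\setminus K) < 10 v_3(\tw(K)-1)$, so $\area(\bdy H) < (\sqrt 3/v_3)\cdot 10 v_3(\tw(K)-1) = 10\sqrt 3(\tw(K)-1)$, the strictness coming from Lackenby's bound.

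For the lower bound, let $S$ be the disjoint union of the two checkerboard surfaces of the diagram, pleated in $S^3\setminus K$. By Lemma~\ref{lemma:checkerboard-area}, $|\chi(S)| = \Cr(K)-2$ and $\area(S) = 2\pi(\Cr(K)-2)$; by Lemma~\ref{lemma:checkerboard-cusparea}, $S$ has an embedded cusp $H_S$ with $\area(H_S)\geq 2\Cr(K)$ whose image lies in the maximal cusp $H$ of $S^3\setminus K$. Hence $k := \area(H_S)/\area(S) \geq \Cr(K)/(\pi(\Cr(K)-2)) \geq 1/\pi$. I would apply Theorem~\ref{thm:arcs} to $F = S$, the cusp $H_S$, this value of $k$, and a parameter $d > \ln\pi$, obtaining a collection $G$ of at least
\[ \frac{(ke^d-1)\pi}{(e^d-1)(\sinh(d)+2\pi)}(\Cr(K)-2) \]
disjoint embedded bi-infinite geodesic arcs in $S$, each of length at most $2d$ with respect to $H_S$.

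These arcs are then pushed into $S^3\setminus K$. By essentiality of the checkerboard surfaces (Lemma~\ref{lemma:checkerboard-essential}) each component of $S$ includes $\pi_1$-injectively, so each arc of $G$ is homotopic in $S^3\setminus K$ to an essential bi-infinite geodesic arc; since $H_S$ maps into $H$ and geodesic straightening does not lengthen the part outside a horoball, each such geodesic arc has length at most $2d$ with respect to $H$ in the sense of Definition~\ref{def:length-arc}. Distinct arcs of $G$ may become homotopic in $S^3\setminus K$, but $G$ consists of disjoint, embedded, pairwise non-parallel arcs in $S$, so Corollary~\ref{corollary:numberhomotopicarcs} bounds the number of arcs of $G$ in any one homotopy class of $S^3\setminus K$ by $3N-1$. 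Thus $S^3\setminus K$ contains at least $|G|/(3N-1)$ homotopically distinct essential arcs of length at most $2d$, and Lemma~\ref{lemma:shortarcs-volume} (with $p = |G|/(3N-1)$ and $L = 2d$), together with $\Cr(K)-2\geq\tw(K)-2$ and $k\geq 1/\pi$, gives
\[ \area(\bdy H) \geq \frac{\sqrt 3\, e^{-4d}(e^d-\pi)}{(3N-1)(e^d-1)(\sinh(d)+2\pi)}(\tw(K)-2). \]
Choosing $d$ to maximize the coefficient (a value somewhat larger than $\ln\pi\approx 1.145$) produces the stated constant, at least $1.844\times 10^{-4}$.

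The genuine difficulty is already absorbed into Theorem~\ref{thm:arcs} (the universal statement that a definite fraction of a maximal family of disjoint geodesic arcs can be taken short) and into Proposition~7.12 of \cite{lp:twcheck}, used here via Corollary~\ref{corollary:numberhomotopicarcs}, so the rest is assembly. The step needing the most care is the length bookkeeping: one must confirm that an arc of length at most $2d$ with respect to $H_S$ in the pleated surface $S$ maps to an arc of length at most $2d$ with respect to the \emph{larger} cusp $H$ in $S^3\setminus K$, which uses both that the image of $H_S$ lies in $H$ and that geodesic representatives do not increase length outside a horoball. A secondary point is parameter compatibility: one needs $ke^d > 1$, hence $d > \ln\pi$ given only $k\geq 1/\pi$, for the count coming from Theorem~\ref{thm:arcs} to be positive.
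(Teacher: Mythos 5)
Your proposal is correct and follows essentially the same route as the paper: checkerboard surfaces pleated in $S^3\setminus K$, Lemmas~\ref{lemma:checkerboard-area} and~\ref{lemma:checkerboard-cusparea}, Theorem~\ref{thm:arcs}, Corollary~\ref{corollary:numberhomotopicarcs}, Lemma~\ref{lemma:shortarcs-volume} for the lower bound, and Lackenby--Agol--Thurston plus B\"or\"oczky for the upper bound. The only deviations are that you use the unconditional bound $\area(H_S)\geq 2\,\Cr(K)$ (so $k\geq 1/\pi$, and no exclusion of the figure-eight and $5_2$ knots is needed) and optimize over $d$ instead of taking $d=\log(2/k)$ as the paper does with $k>2^{1/4}/\pi$; this checks out numerically, since at $e^{d}\approx 3.85$ your coefficient $\sqrt{3}\,e^{-4d}(e^{d}-\pi)/\bigl((e^{d}-1)(\sinh d+2\pi)\bigr)\approx 2.4\times 10^{-4}$, which exceeds $1.844\times 10^{-4}$.
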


\begin{proof}
The upper bound comes from the upper bound on the hyperbolic volume of an alternating knot complement, proved by Lackenby and improved by Agol and D.~Thurston in \cite{lackenby:volume-alt}.  From this result, we know
\[\vol(S^3\setminus K) \leq 10\, v_3 \,(\tw(K) -1),\]
where $v_3 \approx 1.01494$ is the volume of a regular hyperbolic ideal tetrahedron.
  
B\"or\"oczky~\cite{boroczky} proved that the cusp volume of a hyperbolic 3--manifold is at most $\sqrt{3}/(2\, v_3)$ times the volume of the manifold. Hence we have $\vol(H) < 5\sqrt{3}\, (\tw(K) -1).$ Finally, by basic hyperbolic geometry, we have $\area(\bdy H) = 2\vol(H)$.

For the lower bound, we first note that it trivially holds in the case of the figure-eight knot and the knot $5_2$, because these both have $\tw(K) = 2$. So, we can assume that $K$ is neither of these knots. Let $S$ denote the disjoint union of the two checkerboard surfaces.  Since $S$ is essential, it may be pleated in $S^3\setminus K$, and the pleating pulls back to give a hyperbolic metric on $S$.  Let $H_S$ be an embedded cusp for $S$ coming from Lemma~\ref{lemma:checkerboard-cusparea}, with $\area(H_S) \geq 2^{5/4}\,\Cr(K)$. By Lemma \ref{lemma:checkerboard-area}, $S$ has area $2\pi(\Cr(K) - 2))$. Set $k = \area(H_S)/\area(S)$. Then, $k > \root 4 \of 2 / \pi$. Set $d = \log(2/k)< \log(2^{3/4} \pi)$. Then by Theorem \ref{thm:arcs}, there are at least 
$$\frac{(k e^d - 1)\pi }{(e^d - 1)(\sinh (d) + 2 \pi)} |\chi(S)| > (0.083) \, |\chi(S)|$$
disjoint geodesic arcs in $S$ of length at most $2d$ with respect to $H_S$. Map these into $S^3\setminus K$.  Because the checkerboard surfaces are essential, each such arc remains essential under the mapping. Moreover, the length of each such arc may only decrease in $S^3\setminus K$, either by straightening, or by cutting off more length since the image of $H_S$ is contained in $H$.  Hence each has length at most $2d$ with respect to the maximal cusp of $S^3\setminus K$.

By Corollary \ref{corollary:numberhomotopicarcs}, at most $3N-1$ of these arcs may be homotopic to the same arc. Thus in $S^3\setminus K$, we have at least 
$(0.083) |\chi(S)|/(3N-1)$ homotopically distinct essential arcs of length at most $2d$.

Now Lemma \ref{lemma:shortarcs-volume} implies that the cusp area satisfies:
\begin{align*}
\area(\bdy H) &\geq \frac{(0.083) \, |\chi(S)|}{3N -1} \, \sqrt{3} \, e^{-4d}\\
& >  \frac{(0.083) \, \sqrt{3}  \, |\chi(S)|}{8\, \pi^4(3\, N - 1)}\\
& \geq \frac{1.844 \times 10^{-4}}{(3\, N - 1)}\,(\Cr(K)-2) \\
& \geq \frac{1.844 \times 10^{-4}}{(3\, N - 1)}\,(\tw(K)-2).\qedhere
\end{align*}
\end{proof}

\section{An essential twisted surface}\label{sec:twisted}
In the previous section, we showed that the complement of a hyperbolic alternating knot contains essential arcs of bounded length by considering the checkerboard surfaces of the knot.  We want the number of such arcs to be bounded below by a linear function of the twist number of the knot. When there is a bounded number of crossings per twist region, then the checkerboard surfaces give us this result.  However, in general, if there are a high number of crossings in a twist region, then the arcs with bounded length on the checkerboard surface may all be homotopic within this twist region.  We need to find arcs that we can guarantee are not all concentrated within a constant number of twist regions.

In order to do so, we use an essential immersed surface in the knot complement, called a \emph{twisted checkerboard surface}.  Two variations of this surface are defined in \cite{lp:twcheck}, and proved to be $\pi_1$-injective and boundary-$\pi_1$-injective.  
A map $f \colon S \rightarrow M$ of a surface $S$ into a compact 3-manifold $M$
with $f(\partial S) \subset \partial M$ is said to be \emph{boundary-$\pi_1$-injective} if, for any arc $\alpha \co I \to S$ with endpoints in $\partial S$, 
the existence of a homotopy (rel endpoints) of $f \circ \alpha$ into $\partial M$ implies the existence of a homotopy
(rel endpoints) of $\alpha$ into $\partial S$.

In this section, we review the definitions of these surfaces, and remind the reader of key results in \cite{lp:twcheck}.  Using these results, we give a proof of Theorem \ref{thm:main}.

\subsection{Twisted surfaces}
Recall that $K$ is a hyperbolic alternating knot with a prime, twist reduced diagram, which we also refer to by $K$.  For each twist region of $K$ with at least $N$ crossings ($N$ will be determined explicitly below), augment the diagram, in the sense of Adams \cite{adams:aug}. That is, add an unknotted circle to the diagram encircling the two strands of the twist region.  This circle is called a \emph{crossing circle}.  It bounds a disk, the \emph{crossing disk}, which meets the knot transversely in two points.

The result of adding crossing circles to all twist regions with at least $N$ crossings yields the diagram of a link.  We will denote this link and its diagram by $L$.

Recall that $S^3\setminus L$ is homeomorphic to $S^3\setminus L'$, where $L'$ is any link obtained from $L$ by removing any even number of crossings from each twist region encircled by a crossing circle in the diagram of $L$ (e.g.\ \cite{rolfsen}).  We will work with the diagram in which each twist region encircled by a crossing circle has $1$ or $2$ crossings remaining.  We denote this link (and its diagram) by $L_2$.  Finally, for $L_2$, remove the crossing circles from the diagram.  The result is a knot which we denote by $K_2$.  An example is shown in Figure \ref{fig:links}.

\begin{figure}
  \includegraphics{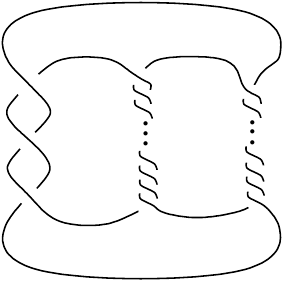}
  \hspace{.2in}
  \includegraphics{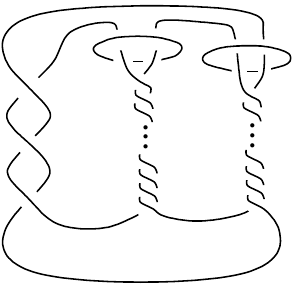}
  \hspace{.2in}
  \includegraphics{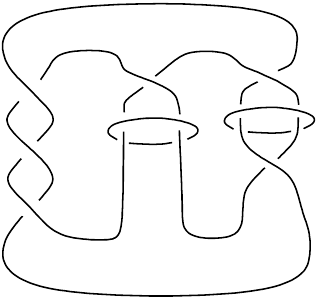}
  \hspace{.2in}
  \includegraphics{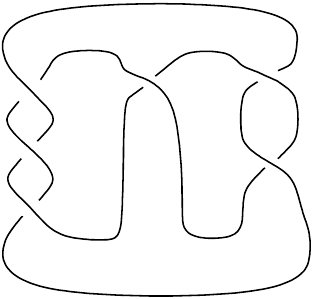}
  \caption{Examples of diagrams for $K$, $L$, $L_2$, and $K_2$. Figure from \cite{lp:twcheck}.}
  \label{fig:links}
\end{figure}

To form the twisted checkerboard surface, start with the checkerboard surfaces of $K_2$.  These are embedded surfaces in $S^3\setminus \nu(K_2)$, where $\nu(K_2)$ denotes a small regular neighborhood of $K_2$.  Color these surfaces \emph{red} and \emph{blue}.

A small regular neighborhood of each crossing circle in $L_2$ will intersect either the red or the blue checkerboard surface in two meridian disks.  Thus when we remove these disks from the surfaces, we obtain two surfaces embedded in $S^3\setminus \nu(L_2)$, which we also color red and blue, and denote by $R_2$ and $B_2$.

Because $S^3\setminus \nu(L_2)$ is homeomorphic to $S^3\setminus\nu(L)$, the surfaces $R_2$ and $B_2$ are also embedded in $S^3\setminus\nu(L)$.  Notice that under the homeomorphism, a meridian curve on the boundary of a regular neighbourhood of the $j$-th crossing circle will be sent to a curve with slope $\pm 1/n_j$, where $2n_j$ crossings were removed from the corresponding twist region to go from the diagram of $L$ to that of $L_2$.

Finally, to obtain $S^3\setminus\nu(K)$ from $S^3\setminus\nu(L)$, we perform meridian Dehn filling on each crossing circle of $L$.  That is, we re-insert a regular neighbourhood of the crossing circle so that the meridian of the crossing circle bounds a disk in the attached solid torus.

To obtain the twisted checkerboard surfaces, when we re-insert the regular neighbourhood of the crossing circles, we also attach annuli or M\"obius bands to the surfaces $R_2$ and $B_2$, as follows.  Let $C_j$ denote the $j$-th crossing circle in $L$. Recall that one of $R_2$ or $B_2$ meets $\partial \nu (C_j)$ in two curves, corresponding to two meridians of a crossing circle of $L_2$. Because the boundary slopes are each $\pm 1/n_j$ on $\bdy\nu(C_j)$, a meridian disk of $\nu(C_j)$ meets the surface $2n_j$ times.  For each meridian disk, attach an interval $I$ to opposite points of intersection on the disk, ensuring that in $S^3\setminus\nu(K)$, the interval runs through the core of the solid torus $\nu(C_j)$.  When we take such an interval for each disk $D^2\times\{\theta\}$ in $D^2\times S^1$, the result is that we have attached an $I$--bundle to the surface $R_2$ or $B_2$. When $n_j$ is odd, so that we are attaching opposite boundary components when we attach the interval, the $I$--bundle is an annulus, connecting the two boundary components of $R_2$ (or $B_2$) on $\bdy\nu(C_j)$.  When $n_j$ is even, the intervals connect one boundary component on $\bdy\nu(C_j)$ to itself, hence we are attaching two M\"obius bands to $R_2$ (or $B_2$).  This is illustrated in Figure~\ref{fig:extend-in}.

\begin{figure}
  \includegraphics{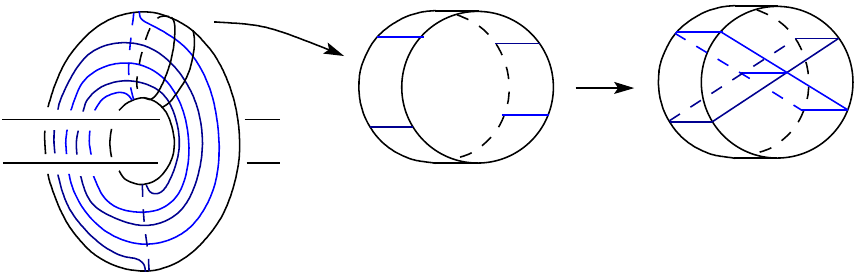}
  \caption{A cross section of the solid torus added to $S^3\setminus\nu(L)$, and how the surface extends into it.  Figure from \cite{lp:twcheck}.}
  \label{fig:extend-in}
\end{figure}

When we do this process for each crossing circle, we obtain two surfaces, denoted $S_{R,2}$ and $S_{B,2}$, and immersions $\phi_{R,2}\co S_{R,2} \to S^3\setminus K$ and $\phi_{B,2}\co S_{B,2}\to S^3\setminus K$.  Again we color the surfaces red and blue.

\begin{lemma}\label{lemma:euler-char}
Let $\tw_N(K)$ denote the number of twist regions of $K$ with at least $N$ crossings, and let $\Cr(\cdot)$ denote the number of crossings of a diagram.  Then the Euler characteristic of the red and blue twisted checkerboard surfaces satisfy
$$|\chi(S_{R,2}) + \chi(S_{B,2})| =  \Cr(K_2) + 2\,\tw_N(K) -2.$$
\end{lemma}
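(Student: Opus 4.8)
The plan is to compute $\chi(S_{R,2}) + \chi(S_{B,2})$ by following the construction of the twisted checkerboard surfaces step by step, tracking how the Euler characteristic changes under each modification and invoking repeatedly the additivity formula $\chi(X \cup_A Y) = \chi(X) + \chi(Y) - \chi(A)$ for gluing two spaces along a subcomplex.

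The starting point is Lemma~\ref{lemma:checkerboard-area} applied to $K_2$: although that lemma is stated for a prime, twist reduced alternating knot, the computation $\chi = 2 - \Cr$ for the disjoint union of the two checkerboard surfaces uses only that one is working with the diagram of a knot (primeness, twist-reducedness, and the alternating property play no role there, and neither does hyperbolicity). Hence the disjoint union of the checkerboard surfaces of $K_2$ has Euler characteristic $2 - \Cr(K_2)$.

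Next I would pass to $R_2 \sqcup B_2$. There are $\tw_N(K)$ crossing circles, and a neighbourhood of each meets exactly one of the two checkerboard surfaces of $K_2$, in two meridian disks, which are deleted. Removing an open disk from a surface lowers its Euler characteristic by $1$, so deleting $2\,\tw_N(K)$ disks in total gives $\chi(R_2) + \chi(B_2) = (2 - \Cr(K_2)) - 2\,\tw_N(K)$. Then I would attach the $I$-bundles produced when the crossing-circle neighbourhoods are re-inserted: across $C_j$ one glues on either an annulus (when $n_j$ is odd), identifying its two boundary circles with the two boundary circles of $R_2$ or $B_2$ lying on $\partial\nu(C_j)$, or two M\"obius bands (when $n_j$ is even), each glued along its single boundary circle to one of those two circles. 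Every piece attached in this way has Euler characteristic $0$ and is glued along circles, which also have Euler characteristic $0$, so by the additivity formula none of these attachments changes the Euler characteristic. Therefore $\chi(S_{R,2}) + \chi(S_{B,2}) = (2 - \Cr(K_2)) - 2\,\tw_N(K)$, and since $\Cr(K_2) + 2\,\tw_N(K) \geq 2$ — immediate if $\tw_N(K) \geq 1$, and otherwise $K_2 = K$ has $\Cr(K) \geq 3$ since $K$ is hyperbolic — taking absolute values yields the claimed identity.

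This is essentially an Euler-characteristic bookkeeping exercise, so I do not expect a serious obstacle. The only points that demand care are confirming that attaching the $I$-bundles (annuli and pairs of M\"obius bands) is Euler-characteristic neutral, and correctly counting the deleted disks as exactly $2\,\tw_N(K)$ — one pair, taken from a single checkerboard surface, for each crossing circle — rather than miscounting across the two surfaces.
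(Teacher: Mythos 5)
Your proposal is correct and follows essentially the same route as the paper: apply Lemma~\ref{lemma:checkerboard-area} to $K_2$, subtract $2\,\tw_N(K)$ for the two disks removed per crossing circle, and note that attaching the annuli or M\"obius bands leaves the Euler characteristic unchanged. The extra care you take (justifying the use of Lemma~\ref{lemma:checkerboard-area} for $K_2$ and checking the sign before taking absolute values) is fine but not a different argument.
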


\begin{proof}
By Lemma \ref{lemma:checkerboard-area}, the absolute value of the Euler characteristic of the disjoint union of the checkerboard surfaces in $K_2$ is $\Cr(K_2)-2$.  To obtain surfaces $R_2$ and $B_2$, remove two disks for each crossing circle, so that $|\chi(R_2)+ \chi(B_2)| = \Cr(K_2) + 2\,\tw_N(K) - 2$.  Finally, connect pairs of boundary components by annuli or attach pairs of M\"obius bands for each crossing circle.  This has no effect on Euler characteristic.
\end{proof}

The next theorem is the main result of \cite{lp:twcheck}. 

\begin{theorem}\label{thm:tw-essential}
Let $S_{R,2}$ and $S_{B,2}$ be the surfaces constructed as above, by twisting checkerboard surfaces around crossing circles encircling each twist region with at least $N$ crossings.  Let $\phi_{R,2}\co S_{R,2} \to S^3\setminus K$ be the immersion of $S_{R,2}$ into $S^3\setminus K$, and similarly for $S_{B,2}$.  Then $\phi_{R,2}$ and $\phi_{B,2}$ are $\pi_1$-injective and boundary-$\pi_1$-injective, provided $N\geq 91$.
\end{theorem}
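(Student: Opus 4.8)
The plan is to reduce essentiality of the twisted surfaces in $S^3\setminus K$ to essentiality of simpler surfaces in the \emph{augmented link complement} $S^3\setminus L$, and then to control how those surfaces behave under the meridian Dehn fillings on the crossing circles that recover $S^3\setminus K$ from $S^3\setminus L$. The first step is to show that, before any twisting, the surfaces $R_2$ and $B_2$ are $\pi_1$-injective and boundary-$\pi_1$-injective in $S^3\setminus L$. This complement is far more tractable: $L$ is an augmented alternating link, so $S^3\setminus L$ is hyperbolic by Adams~\cite{adams:aug}, and since $S^3\setminus L\cong S^3\setminus L_2$, which is just $S^3\setminus K_2$ with the crossing circles drilled out, the surfaces $R_2$ and $B_2$ can be analysed using the ideal polyhedral decomposition associated with the alternating diagram $K_2$. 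Away from small neighbourhoods of the crossing circles, $R_2$ and $B_2$ are exactly the checkerboard surfaces of the alternating diagram $K_2$ with two meridian disks removed near each crossing circle, so one can invoke Lemma~\ref{lemma:checkerboard-essential} --- after first reducing the diagram of $K_2$ if necessary --- or re-run a normal-form and combinatorial-area argument inside the polyhedral decomposition. The only extra work is to handle the annular collars on $\partial\nu(C_j)$: a putative compressing or boundary-compressing disk is made transverse to these collars, and innermost-disk and outermost-arc surgeries push it off them, reducing to the case already understood.

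For the second step, suppose for contradiction that one of the twisted surfaces, say $S_{R,2}$, fails to be $\pi_1$-injective or boundary-$\pi_1$-injective in $S^3\setminus K$. Then either there is an essential simple closed curve on $S_{R,2}$ bounding a disk in $S^3\setminus K$, or there is an essential arc of $S_{R,2}$ with endpoints on $\partial S_{R,2}$ whose image can be homotoped rel endpoints into $\partial N(K)$; in either case we obtain a map of a disk, or of a rectangle, into $S^3\setminus N(K)$, of the kind analysed for Lemma~\ref{lemma:checkerboard-htpcarcs}. Put this map in general position with respect to the solid tori $V_j=\nu(C_j)$ glued in to perform the meridian fillings. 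Inside each $V_j$, the surface $S_{R,2}$ appears as the $2n_j$ parallel meridian disks through which the attached annulus or pair of M\"obius bands threads, where $2n_j$ crossings were removed from the corresponding twist region; since that region had at least $N\geq 91$ crossings, $n_j\geq 45$. The intersection of the disk or rectangle with each $V_j$ is a union of sub-disks, cut by $S_{R,2}\cap V_j$ into sub-regions, and outermost-arc and innermost-circle surgeries either simplify the picture or, because the $2n_j$ meridian disks are parallel and numerous, are obstructed by the very twisting that defines $S_{R,2}$. Iterating these reductions should eventually produce a compressing or boundary-compressing disk, or a homotopy of arcs, lying in $S^3\setminus L$ and carried by $R_2\cup B_2$, contradicting the first step --- unless the number of intersections forced along the way exceeds $n_j$, and it is precisely the threshold $n_j\geq 45$, equivalently $N\geq 91$, that rules this out.

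The main obstacle is the bookkeeping in the second step: classifying how a compressing or boundary-compressing disk for the twisted surface can meet the many parallel meridian disks inside each filling solid torus, cataloguing the ways the surgery arguments can fail, and extracting the explicit bound $N\geq 91$ from this case analysis. This is a delicate argument in the spirit of Lackenby's work on alternating link complements~\cite{lackenby:word,lackenby:volume-alt}, and since it is carried out in full in~\cite{lp:twcheck}, here we simply quote its conclusion. One further point deserves emphasis: $S_{R,2}$ and $S_{B,2}$ are in general \emph{immersed} and possibly non-orientable, so $\pi_1$-injectivity must be taken literally as injectivity of $\phi_{R,2}$ and $\phi_{B,2}$ on fundamental groups; the reductions above remain legitimate because every surgery is carried out on the domain surfaces and on the preimages of the $V_j$ under the rectangle maps, rather than on embedded surfaces in $S^3\setminus K$.
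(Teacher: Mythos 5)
Your proposal is essentially the paper's own treatment: the paper offers no independent proof of this theorem, stating only that it is the main result of \cite{lp:twcheck}, and you likewise end by quoting that reference's conclusion. Your two-step sketch (essentiality of $R_2$ and $B_2$ in $S^3\setminus L$, then controlling compressing disks and boundary-compression rectangles as they meet the filling solid tori, with the threshold $N\geq 91$ coming out of the resulting count) is a reasonable outline of the strategy of \cite{lp:twcheck}, but it is not load-bearing, since both you and the paper defer to that reference for the actual argument.
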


We also showed the following result, Theorem~7.1 in \cite{lp:twcheck}, analogous to Lemma \ref{lemma:checkerboard-htpcarcs}. As in Lemma \ref{lemma:checkerboard-htpcarcs}, this also refers to subsurfaces associated to twist regions, but we need to clarify what this means in the case of the twisted checkerboard surfaces. Each arises from a twist region of the knot $K_2$.
We choose the associated subsurfaces of the checkerboard surface for $K_2$ as follows. If the twist region is encircled by a crossing circle of $L_2$, we choose the subsurface in $R_2$ or $B_2$ so that it is punctured twice by this crossing circle. However, the subsurfaces are disjoint from all other crossing circles. Hence, 
they form subsurfaces of $R_2$ and $B_2$. Since there are inclusions $R_2 \subset S_{R,2}$ and $B_2 \subset S_{B,2}$, we obtain subsurfaces of $S_{R,2}$ and $S_{B,2}$ which are the \emph{subsurfaces associated with a twist region of $K_2$}.

\begin{theorem}\label{thm:htpcarcs}
Suppose $N\geq 121$ and suppose two distinct essential arcs in the surface $S_{B,2}$ have homotopic images in $S^3\setminus{K}$, but are not homotopic in $S_{B,2}$. Then the two arcs are homotopic in $S_{B,2}$ into the same subsurface associated with some twist region of $K_2$.
\end{theorem}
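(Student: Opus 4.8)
The plan is to reduce to Theorem~7.1 of~\cite{lp:twcheck}, which is exactly this statement; the most honest presentation is simply to quote it. If one wants to indicate why it holds, the argument runs parallel to the proof of Lemma~\ref{lemma:checkerboard-htpcarcs} (Proposition~7.12 of~\cite{lp:twcheck}), in the spirit of the arguments of~\cite{lackenby:word,lackenby:volume-alt}, and I would sketch it as follows.

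First I would use the hypothesis that the images of the two essential arcs $a_1$ and $a_2$ are homotopic in $S^3\setminus K$ to obtain a map of a square $\rho\co I\times I\to S^3\setminus{\rm int}(N(K))$ carrying the two vertical edges to $a_1$ and $a_2$ and the two horizontal edges into $\partial N(K)$. The new wrinkle compared with Lemma~\ref{lemma:checkerboard-htpcarcs} is that one works in two stages. First make $\rho$ transverse to the meridian discs of the solid tori glued in when the crossing circles of $L$ are meridian-filled to recover $S^3\setminus K$; the preimage of these discs is a properly embedded $1$-manifold cutting the square into pieces, each of which maps away from the cores of the filling tori and hence, up to homotopy, into $S^3\setminus L$. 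On each such piece I would then take the preimage of the embedded surface $R_2$ or $B_2$, subdividing the square further into regions, each mapping into the complement of the checkerboard surface of $K_2$, so that the boundary of each region reads off a curve in the diagram of $K_2$.

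The bulk of the work is then a combinatorial analysis of these curves in the diagram. Using that $\phi_{B,2}$ is $\pi_1$-injective and boundary-$\pi_1$-injective (Theorem~\ref{thm:tw-essential}, valid for $N\geq 91$) together with the primeness and twist-reducedness of the diagram of $K$, I would eliminate the innermost trivial configurations --- bigon regions, boundary-compressible regions, and regions that are discs meeting the surface in a single arc --- by isotopies of $\rho$ that reduce the number of intersection curves. Once no such reduction is possible, the decomposition of the square is forced into a restricted normal form from which one reads off that the homotopy between $a_1$ and $a_2$ may be taken to lie in the preimage of the subsurface associated with a single twist region of $K_2$. When that twist region is one encircled by a crossing circle of $L_2$, the annuli or M\"obius bands attached to $B_2$, together with the $2n_j$-fold wrapping of the surface through the corresponding filling torus, introduce extra curves into the picture, and controlling these is exactly what forces the strengthened hypothesis $N\geq 121$ in place of $N\geq 91$. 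The conclusion of the theorem is then immediate.

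The hard part will be precisely this combinatorial case analysis: enumerating the curves in the diagram cut out by the regions of the square, and ruling out every nontrivial but non-local configuration. This is the content of Section~7 of~\cite{lp:twcheck}; to give a self-contained proof here one would reproduce that analysis, and we instead refer the reader there.
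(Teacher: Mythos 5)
Your proposal matches the paper exactly: the paper offers no independent proof of this statement, but simply identifies it as Theorem~7.1 of~\cite{lp:twcheck}, which is what you do (your accompanying sketch of the square/curve analysis is consistent with how the paper describes the analogous argument for Lemma~\ref{lemma:checkerboard-htpcarcs}). Citing that result is the intended and correct justification here.
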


Just as in Corollary \ref{corollary:numberhomotopicarcs}, we obtain the following result.

\begin{corollary}\label{cor:numhtpcarcs}
Suppose $N\geq 121$. Let $\mathcal{C}$ be a collection of disjoint embedded essential non-parallel arcs in $S_{B,2} \sqcup S_{R,2}$ which are all homotopic in $S^3\setminus{K}$. Then the number of arcs in $\mathcal{C}$ is at most $2(3N-2)$.
\end{corollary}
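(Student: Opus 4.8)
The plan is to mirror the proof of Corollary~\ref{corollary:numberhomotopicarcs} as closely as possible, with Theorem~\ref{thm:htpcarcs} playing the role of Lemma~\ref{lemma:checkerboard-htpcarcs}. Write $\mathcal{C} = \mathcal{C}_B \sqcup \mathcal{C}_R$, where $\mathcal{C}_B$ consists of the arcs of $\mathcal{C}$ lying in $S_{B,2}$ and $\mathcal{C}_R$ of those lying in $S_{R,2}$. Since $|\mathcal{C}| = |\mathcal{C}_B| + |\mathcal{C}_R|$, it is enough to show $|\mathcal{C}_B| \le 3N-2$ and $|\mathcal{C}_R| \le 3N-2$. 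Because $S_{R,2}$ is built from the red checkerboard surface in exactly the way $S_{B,2}$ is built from the blue one, Theorem~\ref{thm:htpcarcs} has an evident mirror version for $S_{R,2}$, so it suffices to bound $|\mathcal{C}_B|$.

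If $|\mathcal{C}_B| \le 1$ there is nothing to prove, so I would assume $|\mathcal{C}_B| \ge 2$. The arcs of $\mathcal{C}_B$ are disjoint, embedded, essential and pairwise non-parallel in $S_{B,2}$, hence pairwise non-homotopic there, and they are pairwise homotopic in $S^3 \setminus K$ since all of $\mathcal{C}$ is. I would then apply Theorem~\ref{thm:htpcarcs} to pairs of arcs of $\mathcal{C}_B$, exactly as in the proof of Corollary~\ref{corollary:numberhomotopicarcs}, to conclude that every arc of $\mathcal{C}_B$ can be homotoped in $S_{B,2}$ into the subsurface $\Sigma$ associated with a single twist region of $K_2$, and that this can be arranged so that the arcs remain disjoint, embedded and pairwise non-parallel.

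The remaining step is to bound the number of disjoint, pairwise non-parallel essential arcs in $\Sigma$ by $3N-2$, splitting into the two types of associated subsurface described in Section~7 of \cite{lp:twcheck}. If the underlying twist region of $K_2$ is not encircled by a crossing circle of $L$, then it is a twist region of the original diagram of $K$, so it has at most $N-1$ crossings, and $\Sigma$ is either a disc or a surface of Euler characteristic $2-c$ with $c \le N-1$; in either case it carries at most $3N-2$ such arcs, by the count already used in the proof of Corollary~\ref{corollary:numberhomotopicarcs}. If the underlying twist region is encircled, then it has only $1$ or $2$ crossings in $K_2$ and $\Sigma$ is the corresponding subsurface punctured twice, so $|\chi(\Sigma)|$ is bounded by an absolute constant and $\Sigma$ carries far fewer than $3N-2$ such arcs. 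Hence $|\mathcal{C}_B| \le 3N-2$, and symmetrically $|\mathcal{C}_R| \le 3N-2$, giving $|\mathcal{C}| \le 2(3N-2)$.

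The step I expect to require the most care is the one funnelling all of $\mathcal{C}_B$ into the subsurface of a \emph{single} twist region: Theorem~\ref{thm:htpcarcs} only controls pairs of arcs, so one must rule out that distinct pairs are carried by subsurfaces of distinct twist regions. As in the bounded case this should follow from the disjointness of the subsurfaces associated with distinct twist regions of $K_2$ together with essentiality of the arcs, but it is the least automatic point, and I would check it directly against the set-up of \cite{lp:twcheck} rather than quoting the bounded case verbatim.
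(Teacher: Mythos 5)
Your proposal is correct and follows essentially the same route as the paper: apply Theorem~\ref{thm:htpcarcs} (and its mirror for $S_{R,2}$) to homotope all arcs in each surface into the subsurface associated with a single twist region of $K_2$, keeping them disjoint, embedded and non-parallel, and then count at most $3N-2$ such arcs per subsurface (with the encircled twist regions of $K_2$, having only $1$ or $2$ crossings, contributing far fewer), giving $2(3N-2)$ in total. The point you flag about funnelling into a single twist region, and the need to preserve disjointness after the homotopy (which the paper handles by realising everything as geodesics in a hyperbolic structure on $S_{B,2}$), are treated at essentially the same level of detail in the paper's own proof.
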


\begin{proof}
By Theorem~\ref{thm:htpcarcs}, all the arcs of $\mathcal{C}$ that lie in $S_{B,2}$ can be homotoped in $S_{B,2}$ to lie in a suburface associated with a twist region of $K_2$. Note that after this homotopy, the arcs remain disjoint, embedded and non-parallel. One way of proving this is to use a hyperbolic structure on $S_{B,2}$, and to realise the arcs that separate the subsurface from the remainder of $S_{B,2}$ as geodesics, and also to realise the arcs in $\mathcal{C}$ as geodesics. Then the geodesics in $\mathcal{C}$ are disjoint, embedded and non-parallel, and lie in the subsurface associated with the twist region of $K_2$. The number of disjoint embedded essential non-parallel arcs in such a subsurface is at most $3N-2$. Note that when a twist region of $K_2$ is encircled by a crossing circle, it has at most 2 crossings, and so in this case, the maximal number of  disjoint embedded essential non-parallel arcs in a corresponding subsurface is at most $6$, which is less than $3N-2$. By switching roles of $S_{B,2}$ and $S_{R,2}$, we obtain the same result for all the arcs of $\mathcal{C}$ that lie in $S_{R,2}$. Hence there are at most $2(3N-2)$ arcs in total. 
\end{proof}

Because each twisted checkerboard surface is boundary-$\pi_1$-injective in $S^3\setminus K$, in the hyperbolic structure on $S^3\setminus K$ it can be given a pleating. (This is proved, for instance, in Lemma 2.2 of \cite{lackenby:word}.) The pleating pulls back to give a hyperbolic structure on the surface.  The following, analogous to Lemma~\ref{lemma:checkerboard-cusparea}, gives information on that hyperbolic structure.

\begin{prop}\label{prop:tw-cusp-area}
Let $S$ denote the disjoint union of $S_{B,2}$ and $S_{R,2}$, with immersion $f\co S\to S^3\setminus K$.  Consider the hyperbolic structure on $S$ given by pulling back a pleating of its image in the hyperbolic structure on $S^3\setminus K$.  Let $H$ be the maximal cusp for $S^3\setminus K$.  Then there is an embedded cusp for $S$ contained in the preimage $f^{-1}(H)$ with area at least $2\,\tw(K)$. Moreover, if $K$ is neither the figure-eight knot nor the knot $5_2$, then the area of this cusp is at least $2^{5/4} \, \tw(K)$.
\end{prop}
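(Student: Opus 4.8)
The plan is to apply Lemma~\ref{lemma:union-cusparea} to the pair of surfaces $S_1 = S_{R,2}$ and $S_2 = S_{B,2}$, exactly as Lemma~\ref{lemma:checkerboard-cusparea} applies it to the two checkerboard surfaces of $K$ itself. First I would verify the hypotheses. By Theorem~\ref{thm:tw-essential} (so we take $N \geq 91$) the immersions $\phi_{R,2}$ and $\phi_{B,2}$ are $\pi_1$-injective and boundary-$\pi_1$-injective, so $S_{R,2}$ and $S_{B,2}$ are essential and can be pleated, as already noted above; the pull-back of a pleating is the hyperbolic structure on $S = S_{R,2} \sqcup S_{B,2}$. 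Each surface is properly immersed with its entire boundary on $\partial N(K)$: tracing through the construction, the extra boundary circles created when meridian disks are removed from $R_2$ and $B_2$ lie on the crossing circles, and they are absorbed by the annuli and M\"obius bands attached when those crossing circles are meridionally Dehn filled. Hence $\partial S_{R,2}$ is the boundary of the red checkerboard surface of $K_2$ and $\partial S_{B,2}$ is the boundary of the blue one, each a single connected curve on $\partial N(K)$.

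Next I would check that these boundary curves have integral slope and count their intersections. The homeomorphism $S^3 \setminus L \cong S^3 \setminus L_2$ is a twist along the crossing disks, and a meridian of the knot component can be isotoped off all the crossing disks, so it is preserved by this homeomorphism; the subsequent Dehn fillings do not touch $\partial N(K)$. Since the boundary of a checkerboard surface of the knot $K_2$ meets a meridian of $K_2$ exactly once, it follows that $\partial S_{R,2}$ and $\partial S_{B,2}$ each meet a meridian of $K$ exactly once, so they have integral slopes $\sigma_R$ and $\sigma_B$ on $\partial N(K)$. Moreover, as in Lemma~\ref{lemma:checkerboard-cusparea}, the surfaces $S_{R,2}$ and $S_{B,2}$ intersect precisely in the $\Cr(K_2)$ crossing arcs of the diagram of $K_2$ --- the $I$-bundles attached near the crossing circles lie in solid tori each met by only one of the two surfaces, so they contribute no new intersections --- and at each such crossing arc the two boundary curves cross twice, once at the overstrand and once at the understrand. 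Therefore $\Delta(\partial S_{R,2}, \partial S_{B,2}) = |\sigma_R - \sigma_B| = 2\,\Cr(K_2)$. Finally, $\Cr(K_2) \geq \tw(K)$, since each of the $\tw(K)$ twist regions of $K$ retains at least one crossing in $K_2$ (those with at least $N$ crossings are replaced by twist regions with $1$ or $2$ crossings, and the rest are unchanged).

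With these checks in hand, Lemma~\ref{lemma:union-cusparea}, applied with $H$ the maximal cusp of $S^3 \setminus K$, produces an embedded cusp $H_S$ for $S$ with $f(H_S) \subseteq H$ and $\area(H_S) \geq |\sigma_R - \sigma_B| = 2\,\Cr(K_2) \geq 2\,\tw(K)$, and, when $K$ is neither the figure-eight knot nor $5_2$, with $\area(H_S) \geq \sqrt[4]{2}\,|\sigma_R - \sigma_B| = 2^{5/4}\,\Cr(K_2) \geq 2^{5/4}\,\tw(K)$, as required. I expect the main obstacle to be the bookkeeping in the second paragraph: one must check carefully that the twisting homeomorphism and the reinsertion of the crossing circles leave $\partial S_{R,2}$ and $\partial S_{B,2}$ as single curves of integral slope, and that the attached $I$-bundles do not alter the intersection pattern between the two surfaces. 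Once this is confirmed, the argument is essentially a verbatim repetition of the proof of Lemma~\ref{lemma:checkerboard-cusparea}.
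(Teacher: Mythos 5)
Your proposal is correct and follows essentially the same route as the paper: both apply Lemma~\ref{lemma:union-cusparea} to $S_{R,2}\sqcup S_{B,2}$, using that the boundary slopes are integral and differ by $2\,\Cr(K_2)$ because $R_2$ and $B_2$ meet exactly in the crossing arcs of $K_2$, and then use $\Cr(K_2)\geq \tw(K)$ since each twist region of $K$ retains at least one crossing. Your extra verification that the attached $I$-bundles keep the boundary a single integral-slope curve on $\partial N(K)$ and add no new intersections is just a more explicit rendering of what the paper leaves implicit.
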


\begin{proof}
The boundary of the red surface on $\nu(K)$ is a single curve with integral slope, and similarly for the boundary of the blue surface.  The difference in slope $|\partial S_{B,2} - \partial S_{R,2}|$ is twice the crossing number of $K_2$, because the surfaces $R_2$ and $B_2$ intersect exactly in crossing arcs of $K_2$, with boundary slopes intersecting once at an overcrossing, and once at an undercrossing, just as in Figure~\ref{fig:crossingandsurface}.  Thus Lemma~\ref{lemma:union-cusparea} implies there is an embedded cusp for $S$ contained in the preimage $f^{-1}(H)$, with cusp area at least $2\,\Cr(K_2)$.  Moreover, if $K$ is neither the figure-eight knot nor the knot $5_2$, then the area of this cusp is at least $2^{5/4} \, \Cr(K_2)$.

Finally, because the diagram of $K_2$ contains at least one or two crossings per twist region of $K$, we obtain the required lower bound on cusp area.
\end{proof}

\subsection{Bounding cusp volume}

We are now ready to give the proof of the main theorem. 

\begin{named}{Theorem 1.1}
Let $D$ be a prime, twist reduced alternating diagram for some hyperbolic knot $K$, and let $\tw(D)$ be its twist number. 
Let $C$ denote the maximal cusp of the hyperbolic 3-manifold $S^3\setminus K$. Then 
$$A\,(\tw(D)-2) \leq \area(\bdy C) < 10\sqrt{3}\,(\tw(D)-1),$$
where $A$ is at least $2.278 \times 10^{-19}$.
\end{named}

\begin{proof}
The proof of the upper bound is identical to that of Theorem \ref{thm:bounded-crossings}, so we only need prove the lower bound. Again, this is trivial
in the case where $K$ is the figure-eight knot or the knot $5_2$, and so we will assume that it is neither of these knots.

Form the surfaces $S_{R,2}$ and $S_{B,2}$ as above, first by removing all but one or two crossings from twist regions of $K$ with at least $N=121$ crossings to yield the knot $K_2$, and then twisting the checkerboard surfaces of $K_2$ and attaching annuli and M\"obius bands.  Now let $S$ denote the disjoint union of $S_{R,2}$ and $S_{B,2}$, and let $f\co S \to S^3\setminus K$ denote the immersion. By Theorem \ref{thm:tw-essential}, $S$ is $\pi_1$-injective and boundary-$\pi_1$-injective, hence it can be pleated.
Give $S$ the hyperbolic metric induced by the pleating.

Let $H$ denote the maximal cusp of $S^3\setminus K$.  By Proposition \ref{prop:tw-cusp-area} there exists an embedded cusp $H_S$ for $S$ contained in the preimage $f^{-1}(H)$ such that $\area(H_S)\geq 2^{5/4}\,\tw(K)$. By Lemma \ref{lemma:euler-char},
$$\area(S) = 2 \pi (\Cr(K_2) + 2\,\tw_N(K) -2) < 240 \pi \, \tw(K).$$
Set $k = \area(H_S)/\area(S) \geq \root 4 \of 2 / (120 \pi)$, and set $d=\log(2/k)$.  By Theorem~\ref{thm:arcs}, there exist at least 
$$\frac{(k e^d - 1)\pi }{(e^d - 1)(\sinh (d) + 2 \pi)} |\chi(S)| > \frac{|\chi(S)|}{65143}$$
disjoint geodesic arcs in $S$ that have length at most $2d$ with respect to $H_S$. Map these arcs into $S^3\setminus K$.  Because $S$ is boundary-$\pi_1$-injective, each such arc remains essential under the mapping.  Moreover, its length may only decrease in $S^3\setminus K$, either by straightening the arc in $S^3\setminus K$, or by cutting off more length on one end, since $H_S$ may be a strict subset of $f^{-1}(H)$.  Hence each such arc has length at most $2d$ with respect to the maximal cusp of $S^3\setminus K$.

By Corollary~\ref{cor:numhtpcarcs}, for any subcollection $\mathcal{C}$ of these arcs, all of which are homotopic in $S^3 \setminus K$, the number of arcs in $\mathcal{C}$ is at most $2(3N-2) = 722$.
So, in $S^3\setminus K$, we will have at least $|\chi(S)|/(722 \times 65143) $ homotopically distinct embedded essential arcs of length at most $2d$.

Now, as in the proof of Theorem~\ref{thm:bounded-crossings}, Lemma~\ref{lemma:shortarcs-volume} implies that the cusp area satisfies:
\begin{align*}
\area(\bdy H) 
& \geq \frac{\sqrt{3}\,|\chi(S)|}{722 \times 65143} e^{-4d} \\
& \geq \frac{2\sqrt{3}}{(240 \,\pi)^4(722 \times 65143)}(\tw(K) -2) \\
& \geq 2.278 \times 10^{-19}(\tw(K) -2).\qedhere
\end{align*} 
\end{proof}

\bibliographystyle{amsplain}
\bibliography{references}

\providecommand{\bysame}{\leavevmode\hbox to3em{\hrulefill}\thinspace}
\providecommand{\MR}{\relax\ifhmode\unskip\space\fi MR }
\providecommand{\MRhref}[2]{%
  \href{http://www.ams.org/mathscinet-getitem?mr=#1}{#2}
}
\providecommand{\href}[2]{#2}
\begin{thebibliography}{10}

\bibitem{abe-jong-luecke-osoinach}
Tetsuya Abe, Dae~Jong In, John Luecke, and John Osoinach, \emph{Infinitely many
  knots admittingadmitting the same integer surgery and a 4-dimensional
  extension}, preprint, http://arxiv.org/abs/1409.4851.

\bibitem{adams:hyperbolic-knots}
Colin Adams, \emph{Hyperbolic knots}, Handbook of knot theory, Elsevier B. V.,
  Amsterdam, 2005, pp.~1--18. \MR{2179259 (2007a:57003)}

\bibitem{adams-students:cusp-area}
Colin Adams, A.~Colestock, J.~Fowler, W.~Gillam, and E.~Katerman, \emph{Cusp
  size bounds from singular surfaces in hyperbolic 3-manifolds}, Trans. Amer.
  Math. Soc. \textbf{358} (2006), no.~2, 727--741 (electronic). \MR{2177038
  (2006k:57041)}

\bibitem{adams:waist2}
Colin~C. Adams, \emph{Waist size for hyperbolic $3$-manifolds {II}}, preprint.

\bibitem{adams:aug}
\bysame, \emph{Augmented alternating link complements are hyperbolic},
  Low-dimensional topology and {K}leinian groups ({C}oventry/{D}urham, 1984),
  London Math. Soc. Lecture Note Ser., vol. 112, Cambridge Univ. Press,
  Cambridge, 1986, pp.~115--130. \MR{903861 (89f:57003)}

\bibitem{agol:bounds}
Ian Agol, \emph{Bounds on exceptional {D}ehn filling}, Geom. Topol. \textbf{4}
  (2000), 431--449 (electronic). \MR{1799796 (2001j:57019)}

\bibitem{agol-storm-thurston}
Ian Agol, Peter~A. Storm, and William~P. Thurston, \emph{Lower bounds on
  volumes of hyperbolic {H}aken 3-manifolds}, J. Amer. Math. Soc. \textbf{20}
  (2007), no.~4, 1053--1077, With an appendix by Nathan Dunfield. \MR{2328715
  (2008i:53086)}

\bibitem{boroczky}
K.~B{\"o}r{\"o}czky, \emph{Packing of spheres in spaces of constant curvature},
  Acta Math. Acad. Sci. Hungar. \textbf{32} (1978), no.~3-4, 243--261.
  \MR{512399 (80h:52014)}

\bibitem{canary-epstein-green}
Richard~D. Canary, David B.~A. Epstein, and Paul Green, \emph{Notes on notes of
  {T}hurston}, Analytical and geometric aspects of hyperbolic space
  ({C}oventry/{D}urham, 1984), London Math. Soc. Lecture Note Ser., vol. 111,
  Cambridge Univ. Press, Cambridge, 1987, pp.~3--92. \MR{903850 (89e:57008)}

\bibitem{cooper-lackenby}
Daryl Cooper and Marc Lackenby, \emph{Dehn surgery and negatively curved
  {$3$}-manifolds}, J. Differential Geom. \textbf{50} (1998), no.~3, 591--624.
  \MR{1690741 (2000g:57030)}

\bibitem{snappy}
M.~Culler, N.~M. Dunfield, and J.~R. Weeks, \emph{{S}nap{P}y, a computer
  program for studying the geometry and topology of 3--manifolds},
  http://snappy.computop.org.

\bibitem{dasbach-lin:volumish}
Oliver~T. Dasbach and Xiao-Song Lin, \emph{A volumish theorem for the {J}ones
  polynomial of alternating knots}, Pacific J. Math. \textbf{231} (2007),
  no.~2, 279--291. \MR{2346497 (2008h:57009)}

\bibitem{eudave-munoz-luecke}
Mario Eudave-Mu{\~n}oz and J.~Luecke, \emph{Knots with bounded cusp volume yet
  large tunnel number}, J. Knot Theory Ramifications \textbf{8} (1999), no.~4,
  437--446. \MR{1697382 (2000g:57007)}

\bibitem{fkp:fillingvolume}
David Futer, Efstratia Kalfagianni, and Jessica~S. Purcell, \emph{Dehn filling,
  volume, and the {J}ones polynomial}, J. Differential Geom. \textbf{78}
  (2008), no.~3, 429--464. \MR{2396249 (2009c:57010)}

\bibitem{fkp:farey}
\bysame, \emph{Cusp areas of {F}arey manifolds and applications to knot
  theory}, Int. Math. Res. Not. IMRN (2010), no.~23, 4434--4497. \MR{2739802
  (2011k:57027)}

\bibitem{futer-schleimer}
David Futer and Saul Schleimer, \emph{Cusp geometry of fibered 3-manifolds},
  Amer. J. Math. \textbf{136} (2014), no.~2, 309--356. \MR{3188063}

\bibitem{kirby:problems}
Rob Kirby, \emph{Problems in low-dimensional topology}, Geometric topology
  ({A}thens, {GA}, 1993) (Rob Kirby, ed.), AMS/IP Stud. Adv. Math., vol.~2,
  Amer. Math. Soc., Providence, RI, 1997, pp.~35--473. \MR{1470751}

\bibitem{lackenby:word}
Marc Lackenby, \emph{Word hyperbolic {D}ehn surgery}, Invent. Math.
  \textbf{140} (2000), no.~2, 243--282. \MR{1756996 (2001m:57003)}

\bibitem{lackenby:volume-alt}
\bysame, \emph{The volume of hyperbolic alternating link complements}, Proc.
  London Math. Soc. (3) \textbf{88} (2004), no.~1, 204--224, With an appendix
  by Ian Agol and Dylan Thurston. \MR{MR2018964 (2004i:57008)}

\bibitem{lp:twcheck}
Marc Lackenby and Jessica~S Purcell, \emph{Essential twisted surfaces in
  alternating links}, preprint, http://arxiv.org/abs/1410.6318.

\bibitem{menasco}
W.~Menasco, \emph{Closed incompressible surfaces in alternating knot and link
  complements}, Topology \textbf{23} (1984), no.~1, 37--44. \MR{721450
  (86b:57004)}

\bibitem{menasco-thist:classif-alt}
William Menasco and Morwen Thistlethwaite, \emph{The classification of
  alternating links}, Ann. of Math. (2) \textbf{138} (1993), no.~1, 113--171.
  \MR{1230928 (95g:57015)}

\bibitem{osoinach:surgery}
John~K. Osoinach, Jr., \emph{Manifolds obtained by surgery on an infinite
  number of knots in {$S^3$}}, Topology \textbf{45} (2006), no.~4, 725--733.
  \MR{2236375 (2007m:57010)}

\bibitem{purcell:cusps}
Jessica~S. Purcell, \emph{Cusp shapes under cone deformation}, J. Differential
  Geom. \textbf{80} (2008), no.~3, 453--500. \MR{2472480 (2009j:57020)}

\bibitem{rolfsen}
Dale Rolfsen, \emph{Knots and links}, Mathematics Lecture Series, vol.~7,
  Publish or Perish Inc., Houston, TX, 1990, Corrected reprint of the 1976
  original. \MR{1277811 (95c:57018)}

\bibitem{thistle:length}
Morwen Thistlethwaite, Conjecture stated in research talks 2002, 2012.

\bibitem{thistle-tsviet}
Morwen Thistlethwaite and Anastasiia Tsvietkova, \emph{An alternative approach
  to hyperbolic structures on link complements}, Algebr. Geom. Topol.
  \textbf{14} (2014), no.~3, 1307--1337. \MR{3190595}

\bibitem{thurston}
William~P. Thurston, \emph{The geometry and topology of three-manifolds},
  Princeton Univ. Math. Dept. Notes, 1979.

\bibitem{thurston:bulletin}
\bysame, \emph{Three-dimensional manifolds, {K}leinian groups and hyperbolic
  geometry}, Bull. Amer. Math. Soc. (N.S.) \textbf{6} (1982), no.~3, 357--381.
  \MR{648524 (83h:57019)}

\bibitem{snappea}
Jeffrey~R. Weeks, \emph{{Snappea}}, Available at
  http://www.geometrygames.org/SnapPea/.

\end{thebibliography}

\end{document}